\documentclass{article}
\usepackage {amssymb}
\usepackage {amsfonts}
\usepackage {amsmath}
\usepackage {amsthm}
\usepackage{graphicx}
\usepackage{graphics}
\usepackage {framed}
\usepackage {amsxtra}
\usepackage {enumerate}
\usepackage[margin=1in]{geometry}

\newcommand{\restrict}{\mbox{$\mid\hspace{-1.1mm}\grave{}$}}

\theoremstyle{plain}
\newtheorem{thm}{Theorem}[section]
\newtheorem{cor}[thm]{Corollary} 
\newtheorem{lem}[thm]{Lemma} 

\newtheorem{prop}[thm]{Proposition}

\theoremstyle{definition}
\newtheorem{question}[thm]{Question}
\newtheorem{defn}[thm]{Definition}

\newtheorem*{Lem}{Lemma}
\newtheorem*{Exmp}{Example}

\theoremstyle{remark}

\begin{document}

\title{Sums of finitely many distinct rationals}

\author {David Hobby\footnote{hobbyd@newpaltz.edu\ \ State University of New York\ \ New Paltz, NY 12561} \and 
Donald Silberger\footnote{silbergd@newpaltz.edu\ \ State University of New York\ \ New Paltz, NY 12561} \and 
Sylvia Silberger\footnote{sylvia.silberger@hofstra.edu\ \ Hofstra University\ \ Hempstead, NY 11549}}

\maketitle

\centerline{\sf Remembering Jacqueline Bare Grace \  1942-2016}

\begin{abstract} ${\cal E}$ denotes the family of all finite nonempty $S\subseteq{\mathbb N}:=\{1,2,\ldots\}$, and ${\cal E}(X):={\cal E}\cap\{S:S\subseteq X\}$ when $X\subseteq{\mathbb N}$. 
Similarly, ${\cal F}$ denotes the family of all finite nonempty $T\subseteq{\mathbb Q}^+$, and ${\cal F}(Y) := {\cal F}\cap\{T:T\subseteq Y\}$ where ${\mathbb Q}^+$ is the set of all positive 
rationals and $Y\subseteq{\mathbb Q}^+$.
   
This paper treats the functions $\sigma:{\cal E}\rightarrow{\mathbb Q}^+$ given by $\sigma:S\mapsto\sigma S :=\sum\{1/x:x\in S\}$, the function $\delta:{\cal E}\rightarrow{\mathbb N}$ 
defined by $\sigma S = \nu S/\delta S$ where the integers $\nu S$ and $\delta S$ are coprime, and the more general function $\Sigma:{\cal F}\rightarrow{\mathbb Q}^+$ where $\Sigma T$  
denotes the sum of the elements in $T$ for $T\in{\cal F}$. 

{\bf Theorem \ref{Main1}.} \ For each $r\in{\mathbb Q}^+$, there exists an infinite pairwise disjoint subfamily ${\cal H}_r\subseteq{\cal E}$ such that $r=\sigma S$ for all $S\in{\cal H}_r$. 

{\bf Theorem \ref{Main2}.} \ Let $X$ be a pairwise coprime set of positive integers. Then $\sigma\restrict{\cal E}(X)$ and $\delta\restrict{\cal E}(X)$ are injective. Also, $\sigma C\in{\mathbb N}$ 
for $C\in{\cal E}(X)$ only if $C=\{1\}$.  

{\bf Theorem \ref{NotHyper}.} There is a set $X$ of positive rational numbers for which $\Sigma:{\cal F}(X)\rightarrow{\mathbb Q}^+$ is a surjection, but for which $1\in X$ and 
 the only $S\in{\cal F}(X)$ with $\Sigma S = 1$ is $S = \{1\}$. \end{abstract}

\section{Introduction}

The first portion of this paper deals with the reciprocals\footnote{These positive rational numbers have long been known also as ``Egyptian fractions''.} of the positive integers, and the shorter 
concluding portion deals analogously with the more general positive rationals. 

${\cal E}$ denotes the family of all nonempty finite subsets $S \subseteq {\mathbb N} := \{1,2,3,\ldots\}$, $\cal{F}$ denotes the family of all finite nonempty $T\subseteq \mathbb Q$, and ${\cal I}$ denotes its subfamily of finite intervals 
$[m,n] := \{m,m+1,\ldots,n-1,n\}$ of consecutive integers. The set of positive rational numbers is written ${\mathbb Q}^+$. The first portion of our paper is devoted principally to the function 
$\sigma:{\cal E}\rightarrow{\mathbb Q}^+$ defined by 
\[
\sigma: S\mapsto \sigma S\quad\mbox{where}\quad \sigma S := \sum_{x\in S}\frac{1}{x}.
\] 

For $r\in{\mathbb Q}^+$, the expression ${\cal E}_r$ denotes the family of all finite $S\subseteq{\mathbb N}$ for which $r=\sigma S$. 

\begin{thm}\label{Main1} For each $r\in{\mathbb Q}^+$, there exists an infinite pairwise disjoint subfamily ${\cal H}_r\subseteq{\cal E}_r$. \end{thm}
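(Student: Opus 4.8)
The plan is to reduce Theorem~\ref{Main1} to a single strengthening: that $r$ can be realized as $\sigma S$ using reciprocals of arbitrarily large integers. Concretely, I would first isolate the following assertion.

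\textbf{Key Lemma.} For every $r\in{\mathbb Q}^+$ and every $N\in{\mathbb N}$ there is a set $S\in{\cal F}$ with $\min S>N$ and $\sigma S=r$.

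Granting the Key Lemma, the theorem is immediate by a greedy construction over disjoint ranges. Take $S_1$ to be any member of ${\cal F}_r$ furnished by the lemma with $N=0$; and having built $S_1,\dots,S_k$, set $N_k=\max\bigcup_{i\le k}S_i$ and invoke the lemma with $N=N_k$ to obtain $S_{k+1}\in{\cal F}_r$ with $\min S_{k+1}>N_k$. Then ${\cal H}_r:=\{S_k:k\in{\mathbb N}\}$ consists of sets lying in pairwise disjoint blocks of integers, so the $S_k$ are distinct and pairwise disjoint, and each satisfies $\sigma S_k=r$; hence ${\cal H}_r\subseteq{\cal F}_r$ is infinite and pairwise disjoint, as wanted.

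For the Key Lemma I would exploit the divergence of the harmonic series together with the Fibonacci--Sylvester greedy expansion. Given $r$ and $N$, let $m\ge N$ be the largest integer for which $\sum_{n=N+1}^{m}1/n\le r$ (such $m$ exists since the partial sums of $\sum_{n>N}1/n$ increase without bound; if already $1/(N+1)>r$ we take $m=N$ with the interval empty). Writing $S'=[N+1,m]$ and $\rho=r-\sigma S'$, we have $0\le\rho<1/(m+1)$. If $\rho=0$ we are done with $S=S'$. Otherwise apply the greedy algorithm to $\rho$: since $1/\rho>m+1$, the first chosen denominator $\lceil 1/\rho\rceil$ already exceeds $m$, and because the greedy denominators strictly increase and the process terminates (its numerators strictly decrease), we obtain a finite set $T$ with $\min T>m$ and $\sigma T=\rho$. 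Then $S=S'\cup T$ has $\sigma S=r$ and $\min S>N$; the union is genuinely disjoint because every element of $T$ exceeds $m$, which in turn is at least every element of $S'$.

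The main obstacle, and the point requiring care, is landing \emph{exactly} on $r$ while keeping all denominators large: harmonic truncation alone only approximates $r$, and an unconstrained Egyptian expansion of $r$ may use small denominators. The device that resolves both difficulties at once is the observation that once the residual $\rho$ has been driven below $1/(m+1)$, the greedy algorithm is \emph{forced} to choose denominators larger than $m$, so its output is automatically disjoint from the block $[N+1,m]$. In the writeup I would record the two standard facts about the greedy expansion that this argument relies upon—strict monotonicity of its denominators and its termination—each of which follows from the inequality $1/d\le\rho<1/(d-1)$ that governs the greedy choice $d=\lceil 1/\rho\rceil$.
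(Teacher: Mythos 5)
Your proposal is correct, but it follows a genuinely different route from the paper's. The paper's engine is the \emph{Vital Identity} $1/z=1/(z+1)+1/(z(z+1))$: starting from $r=a/b$ with $b\ge2$, it repeatedly splits each reciprocal to build, for each $k$, a set ${\bf W}_kb$ of $2^k$ integers with $\sigma({\bf W}_kb)=1/b$, where the bulk of the work (Lemmas \ref{Lem2.1}--\ref{Lem2.3} and Theorem \ref{Th2.4}) goes into showing that distinct words of equal length produce distinct integers, so that the multisets ${\bf W}_kb$ are honest sets; disjointness of the chosen generations is then arranged by spacing the lengths $k_i$ far apart, and unions of $a$ of these sets realize $a/b$. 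You instead reduce the theorem to a Key Lemma --- every $r$ is a sum of distinct reciprocals all exceeding any prescribed $N$ --- and prove it by truncating the harmonic tail $\sum_{n>N}1/n$ maximally below $r$ and finishing with the Fibonacci--Sylvester greedy expansion, whose forced choice $d=\lceil 1/\rho\rceil>m+1$ once $\rho<1/(m+1)$ guarantees both exactness and disjointness from the harmonic block; the disjoint family ${\cal H}_r$ then comes from iterating with $N$ pushed past $\max$ of everything used so far. Your argument is shorter and leans on two classical facts (divergence of the harmonic series, termination and strict monotonicity of the greedy algorithm) rather than on bespoke word combinatorics; it also yields a stronger conclusion --- members of ${\cal H}_r$ living in consecutive disjoint blocks of $\mathbb{N}$ --- which is close in spirit to the Conjecture the paper states at the end of \S2. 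What the paper's construction buys in exchange is explicit, uniformly structured witnesses (the sets ${\bf W}_{k_i}b$ of sizes $2^{k_i}$) and machinery ($\diamond$, $\star$, and words over them) that is reused in \S4; your witnesses depend on the opaque behavior of the greedy algorithm. One cosmetic point: since the paper takes ${\mathbb N}=\{1,2,\ldots\}$, state your Key Lemma for integers $N\ge0$, or start your induction with $N=1$ and note that any member of ${\cal F}_r$ serves as $S_1$.
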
 

The function $\sigma$ induces two other functions, $\nu:{\cal E}\rightarrow{\mathbb N}$ and $\delta:{\cal E}\rightarrow{\mathbb N}$, via the fact that for each $S\in{\cal E}$ there is a 
unique coprime pair $\langle \nu S,\delta S\rangle$ of positive integers for which $\sigma S = \nu S/\delta S$. We discuss both $\delta$ and $\sigma$.

When $X\subseteq{\mathbb N}$ then ${\cal E}(X) := {\cal E}\cap\{S:S\subseteq X\}$. Thus, e.g., ${\cal E}({\mathbb N}) = {\cal E}$.  

\begin{thm}\label{Main2} Let $X$ be a pairwise coprime subset of ${\mathbb N}$. Then $\sigma\restrict{\cal E}(X)$ and $\delta\restrict{\cal E}(X)$ are injections. Also, 
$\sigma C\in{\mathbb N}$ for $C\in{\cal E}(X)$ only if $C=\{1\}$. \end{thm}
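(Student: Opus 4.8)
The plan is to reduce all three assertions to one structural lemma: if $X$ is pairwise coprime and $S=\{x_1,\dots,x_k\}\in\mathcal{F}(X)$, then the fraction obtained by placing $\sigma S$ over the common denominator $D:=\prod_{x\in S}x$ is already in lowest terms, so that $\delta S=\prod_{x\in S}x$. Here $\sigma S=N/D$ with $N=\sum_{i}D/x_i$, and the whole point is that $\gcd(N,D)=1$. To see this I would fix a prime $p\mid D$; by pairwise coprimality $p$ divides exactly one element $x_j$ and no other $x_i$. Every summand $D/x_i$ with $i\neq j$ still contains the factor $x_j$, hence is divisible by $p$, whereas the single summand $D/x_j=\prod_{l\neq j}x_l$ is coprime to $p$. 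Thus $N\equiv D/x_j\not\equiv 0\pmod p$, so no prime of $D$ divides $N$, proving the lemma.

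Granting the lemma, take $X\subseteq\{2,3,\dots\}$ and read off the conclusions. The integrality claim is immediate: $\sigma C\in\mathbb{N}$ forces $\delta C=1$, i.e.\ $\prod_{x\in C}x=1$, which for a nonempty set of positive integers holds only when $C=\{1\}$. For injectivity of $\delta\restrict\mathcal{F}(X)$ I would recover $S$ from the value $\prod_{x\in S}x$ by unique factorization: for each $x\in S$ pick a prime $p\mid x$; then $p$ divides $\prod_{y\in T}y$, so $p\mid y$ for some $y\in T$, and since distinct members of a pairwise coprime set are coprime we get $x=y\in T$. Hence $S\subseteq T$, and by symmetry $S=T$. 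Injectivity of $\sigma\restrict\mathcal{F}(X)$ then follows at once, because equal rationals in lowest terms have equal denominators, so $\sigma S=\sigma T$ gives $\delta S=\delta T$ and therefore $S=T$.

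The point demanding the most care is the unit $1$, which contributes a trivial factor to $D$ and carries no prime, so that $\delta$ cannot distinguish $S$ from $S\cup\{1\}$ (for instance $\delta\{2\}=2=\delta\{1,2\}$). Consequently the $\delta$-injectivity statement genuinely requires that $X$ omit $1$, and that is the hypothesis under which the recovery argument above determines $S$ outright. By contrast, $\sigma$-injectivity survives the admission of $1$, since $\sigma$ detects the unit through the additive jump $\sigma S-\sigma(S\setminus\{1\})=1$: one would strip $1$ from both sets, match the remaining elements (all $\geq 2$) by the denominator argument, and then read off the presence of the unit from the value of $\sigma$. Beyond this bookkeeping, the computational heart of the whole proof is the reduced-fraction lemma---in particular the observation that modulo a prime $p\mid D$ only the summand $D/x_j$ survives---and it is there that I would concentrate the effort.
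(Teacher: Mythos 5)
Your proposal is correct, and it takes a genuinely different (and in one respect stronger) route than the paper. The paper's proof works directly with a pair of distinct sets $A,B\in{\cal F}(X)$: it picks $a\in A\setminus B$ and a prime $p$ dividing $a$ but coprime to every other member of $A\cup B$, concludes $p\mid\delta A$ while $p\nmid\delta B$, hence $\delta A\not=\delta B$ and $\sigma A\not=\sigma B$, and reads the integrality claim off from $\delta C=1$. You instead establish the exact structural formula $\delta S=\prod_{x\in S}x$ (your reduced-fraction lemma) and then recover $S$ from $\delta S$ by unique factorization. The arithmetic core is the same in both arguments --- modulo a prime dividing exactly one element of the set, only one summand of the numerator survives, which is the paper's Lemma \ref{sylves1} specialized to pairwise coprime sets --- but your formulation buys an exact description of $\delta$ on ${\cal F}(X)$ that the paper never states, and it makes the injectivity mechanism (unique factorization of the product) completely transparent.

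More importantly, you caught a real defect that the paper's ``without loss of generality'' silently glosses over: if $1\in X$, the witnessing element of the symmetric difference may be $1$, which has no prime divisor, and then the paper's argument collapses --- necessarily so, since $\delta\{2\}=2=\delta\{1,2\}$ shows that $\delta\restrict{\cal F}(X)$ is \emph{not} injective whenever $\{1,x\}\subseteq X$ for some $x\ge2$. So the $\delta$ clause of the theorem is false as stated unless $1\notin X$ (or $X=\{1\}$), which is exactly the hypothesis you isolate. Your observation that $\sigma$-injectivity nevertheless survives the admission of $1$ is also correct, and your patch is the right one: from $\sigma S=\sigma T$ one gets $\delta(S\setminus\{1\})=\delta(T\setminus\{1\})$, hence $S\setminus\{1\}=T\setminus\{1\}$ by the product argument, and then the value of $\sigma$ itself determines whether $1$ lies in the set. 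In short: your proof is sound, its approach is materially different from the paper's, and it repairs an edge case that the paper's own proof does not survive.
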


Our work grew from our interest in the set $\sigma[{\cal I}]$ of ``harmonic rationals'', by which people mean the numbers that occur as sums of finite segments of the harmonic series 
\[1+\frac{1}{2}+\frac{1}{3}+\cdots+\frac{1}{j-1}+\frac{1}{j}+\frac{1}{j+1}+\cdots = \infty.\] It is well known and easy to see that $\sigma[{\cal I}]$ is dense in ${\mathbb R}^+$, but 
$\sigma[{\cal I}]\not={\mathbb Q}^+$ is true as well. Indeed,  L. Theisinger \cite{Theisinger} proved in 1915 that $\sigma[1,n] \in {\mathbb N}$ only if $n=1$, and in 1918 
J. K\"ursch\'ak \cite{Kurschak} proved that $\sigma[m,n] \in {\mathbb N}$ only if $m=n=1$. The latter fact is recalled, for instance, as Exercise 3 on Page 7 of \cite{Baker}. 

Other natural subfamilies of $S\in{\cal E}$ for which $\sigma S \not\in {\mathbb N}$ were noted later.  P. Erd\"os \cite{Erdos2}, also Page 157 of \cite{Hoffman}, extended the 
Theisinger-K\"ursch\'ak theorem to the finite segments of an arithmetic series:

\centerline{If \  $d \ge 1$, \ and if either \ $m>1$ \ or \ $k>1$, \  then \ $\displaystyle \sum_{j=0}^{k-1}\frac{1}{m+dj} \not\in{\mathbb N}$. }  

\noindent Erd\"os' result was carried further by H. Belbachir and A. Khelladi \cite{Belbachir}: 

\centerline{For $\{a_0,a_1,\ldots,a_{k-1}\}\subseteq{\mathbb N}$, if \ $d \ge 1$, and if either \ $m>1$ \ or \ $k>1$, \  then  \  
$\displaystyle \sum_{j=0}^{k-1}\frac{1}{(m+dj)^{a_j}} \not\in {\mathbb N}$. }

According to Erd\"os \cite{Erdos2}, looking beyond sums of distinct reciprocals R. Obl\'ath showed that 

\centerline{ $\displaystyle \sum_{i=m}^n \frac{a_i}{i} \not\in {\mathbb N}$ \ if \ $i$ \ is coprime to \ $a_i$ \ for each \ $i\in [m,n]$, \ 
where \ $[m,n] \not= \{1\}$. } \vspace{.3em}

\noindent We note that this sum of Obl\'ath fails to be an integer provided only that his $a_i$ are odd whenever $i$ is even.\vspace{.5em}

Every Theisinger-K\"ursch\'ak sort of result we mentioned specifies a case where $\sigma S \not\in {\mathbb N}$. Eventually we branched off into a side topic, which led to our rediscovering a result 
published \cite{Erdos3} in 1946:\vspace{.5em}

\noindent{\bf Theorem (Erd\"{o}s-Niven)} \ {\sl The function $\sigma\restrict{\cal I}$ is injective.}\vspace{.5em}

Our reinvention of this Erd\"{o}s-Niven wheel resulted in machinery that provoked us to consider an analogous surjectivity question; to wit: 
Is the range of $\sigma$ equal to ${\mathbb Q}^+$?  

Theorem \ref{Main1} answers this in the affirmative.

We prove Theorem \ref{Main1} in \S2 and Theorem \ref{Main2} in \S3. In \S4 we look again at a serendipitous gift.  In \S5 we initiate a study of the function $\Sigma:{\cal F}\rightarrow{\mathbb Q}^+$ 
where $\Sigma T$ denotes the sum of the $q\in T$ for $T\in{\cal F}$. 

\section{Surjectivity} 

The following equality holds for all complex numbers $z\notin\{-1,0\}$. Its utility earns it the name, {\em Vital Identity}:  
\[\frac{1}{z} = \frac{1}{z+1}+\frac{1}{z(z+1)},\] 
The Vital Identity serves as our main tool for proving Theorem \ref{Main1}, by giving us that $\sigma\{n\}=\sigma\{n+1,n(n+1)\}$ for all $n\in{\mathbb N}$. This fact can be usefully 
restated as \ $\sigma\{n\}=\sigma\{\diamond n,\star n\}$, where $\diamond:{\mathbb N}\rightarrow{\mathbb N}$ and $\star:{\mathbb N}\rightarrow{\mathbb N}$ are the strictly increasing functions 
defined by \ $\diamond:n\mapsto n+1$ \ and by \ $\star:n\mapsto n(n+1)$.

Each word $\mathbf{w}$ in the alphabet $\{\diamond,\star\}$ expresses a string of function compositions engendering a strictly increasing function $\mathbf{w}:{\mathbb N}\rightarrow{\mathbb N}$. Context will tell us when the word $\mathbf{w}$ is to be treated as an injection.\vspace{.5em}

An easy induction on $k\ge1$ shows that the integer $\star^kn$ has at least $k+1$ distinct prime factors if $n\ge2$. This seems less surprising when one contemplates that $\star^kn>n^{2^k}$.\vspace{.5em} 

Defining ${\cal E}_r := \{S:S\in{\cal E}\wedge \sigma S=r\}$, nobody will doubt that the 
family ${\cal E}/\sigma:=\{{\cal E}_r: r\in{\mathbb Q}^+\}$ is an infinite partition of the 
set ${\cal E}$. So, our 
only substantive task here is to show, for $r\in{\mathbb Q}^+$, that there is an infinite pairwise disjoint subfamily ${\cal H}_r\subseteq{\cal E}_r$, whence the family ${\cal E}_r$ itself is infinite.  

There are infinitely many pairs $\langle a,b\rangle\in{\mathbb N}^2$ for which $r=a/b$. For the sake of convenience, we will choose and fix $b$; we pick this $b\ge2$ in order to avoid unessential issues due 
to the fact that $\diamond1=\star1$. We then begin by constructing an infinite pairwise disjoint subfamily ${\cal G}_{1/b}\subseteq{\cal E}$ for which $1/b=\sigma S$ whenever $S\in{\cal G}_{1/b}$.\vspace{.5em} 

The expression $\mathbf{W}$ denotes the set of all finite words $\mathbf{w}$ in the letters $\diamond$ and $\star$.\footnote{Many 
semigroup theorists would write $\mathbf{W}$ as $\{\diamond,\star\}^*$. It is the free monoid on the 
letters $\diamond$ and $\star$.} The length of the word $\mathbf{w}$ is written $|\mathbf{w}|$. 
When the word $\mathbf{w}$ is interpreted as a function on ${\mathbb N}$, we show this with parentheses, writing $\mathbf{w}(n)$.  As a function, the empty word of length zero is the identity function on ${\mathbb N}$.

For $k\ge0$ we define $\mathbf{W}_k$ to be the set of all $\mathbf{w} \in \mathbf{W}$ with $|\mathbf{w}|=k$, and $\mathbf{W}_k (n)$ denotes the multiset of all integers of the form $\mathbf{w}(n)$ for $n \in \mathbb{N}$ and $\mathbf{w} \in \mathbf{W}_k$. Similarly, $\mathbf{W}(n)$ denotes 
the multiset of all $\mathbf{w}(n)$ for $n \in \mathbb{N}$ and $\mathbf{w} \in \mathbf{W}$. 

We will need to deal with the fact that $\mathbf{w}(n)=\mathbf{v}(n)$ can happen for some $n  \in \mathbb{N}$ while $\mathbf{w}$ and $\mathbf{v}$ are distinct elements of $\mathbf{W}$.\vspace{.5em}

For example, when $\mathbf{w} := \diamond\star^2\diamond^3$ then $\mathbf{w}(n) = \diamond\star^2\diamond^3n = \diamond\star^2(n+3) = \diamond\star((n+3)(n+4)) = 
\diamond((n+3)(n+4)((n+3)(n+4)+1)) =  (n+3)(n+4)((n+3)(n+4)+1)+1$. Thus $\mathbf{w}(1) = 421$. Incidentally, $|\mathbf{w}| = |\diamond\star^2\diamond^3| = 6$. 
So $\mathbf{w} \in \mathbf{W}_6$ and 
$421\in \mathbf{W}_6(1)$.  
Also, although $\diamond^4$ and $\star$ are distinct as words, $\diamond^4(2) = 6 = \star(2)$. However, notice that $|\diamond^4| = 4 \neq 1 = |\star|$.\vspace{.5em}

It is useful to write each word $\mathbf{w} \in \mathbf{W}$ in the format $\mathbf{w} = \diamond^{j_r}\star\diamond^{j_{r-1}}\star\cdots\diamond^{j_1}\star\diamond^{j_0}$ where $j_i\ge 0$, since the roles played 
by the basic components $\diamond$ and $\star$ in our story will differ. Of course then $|\mathbf{w}| = r+\sum_{i=0}^r j_i$.

Bearing in mind that $b \geq 2$, the following lemma is obvious.

\begin{lem}\label{Lem2.1} Let $b\le m<\star m\le n<\star(m+1)$. Then $\mathbf{w}(b)=n$ if and only if either $\mathbf{w}=\diamond^{n-b}$ or there is an integer $k\in[b,m]:=\{b,b+1,\ldots m\}$ and a possibly empty word ${\mathbf u}$ 
such that ${\mathbf u}(b) = k$ and such that $\mathbf{w}=\diamond^{n-\star k}\star{\mathbf u}$.  \end{lem}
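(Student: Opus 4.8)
The plan is to induct on the structure of ${\bf w}$ by splitting on whether the letter $\star$ occurs in it, and, when it does, by isolating the \emph{leftmost} $\star$. The guiding observation is that a word acts on $b$ with its rightmost letter first, so once the leftmost $\star$ has been applied only $\diamond$'s remain; this makes the value produced by that last $\star$ easy to control.

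For the forward implication I would argue as follows. Suppose ${\bf w}b = n$. If ${\bf w}$ is $\star$-free then ${\bf w} = \diamond^j$, and ${\bf w}b = b + j = n$ forces $j = n-b \ge 0$ (here $n > b$ holds automatically, since $n \ge \star m \ge \star b > b$), giving the first alternative. Otherwise write ${\bf w} = \diamond^j\star{\bf u}$ with $\diamond^j$ the leading run of $\diamond$'s and $\star$ the first $\star$, and set $k := {\bf u}b$. Then ${\bf w}b = \star k + j = n$, so $j = n - \star k$ and ${\bf w} = \diamond^{n-\star k}\star{\bf u}$, which is the second alternative once I know $k\in[b,m]$.

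Verifying $k \in [b,m]$ is the only real content, small as it is. The lower bound is automatic: each of $\diamond,\star$ sends a value $\ge b$ to a value at least as large, so $k = {\bf u}b \ge b$. For the upper bound I would use the strict monotonicity of $\star$ on ${\mathbb N}$ together with the interval hypothesis: from $j \ge 0$ we get $\star k \le n < \star(m+1)$, whence $k \le m$. This is precisely where the assumption $\star m \le n < \star(m+1)$ earns its keep, and it explains why the bound is $m$ rather than anything larger.

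The converse is then a direct evaluation of the two prescribed forms and should be routine. The word $\diamond^{n-b}$ sends $b$ to $n$ (its exponent nonnegative because $n>b$), and a word $\diamond^{n-\star k}\star{\bf u}$ with ${\bf u}b=k$ and $k\le m$ sends $b$ to $\star k + (n-\star k) = n$, the exponent $n-\star k$ being nonnegative since $k\le m$ gives $\star k \le \star m \le n$. Because both computations are immediate, the whole lemma reduces to the bookkeeping that pins $k$ down in $[b,m]$; I expect no genuine obstacle, which matches the author's remark that the lemma is obvious.
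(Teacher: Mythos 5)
Your argument is correct and complete: splitting at the leftmost $\star$, using monotonicity of $\diamond$ and $\star$ to pin $k$ into $[b,m]$, and the two direct evaluations for the converse are precisely the routine verification this lemma needs. The paper gives no proof at all---it simply declares the lemma obvious---so your write-up supplies exactly the argument the authors left implicit, with no gaps.
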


\begin{lem}\label{Lem2.2} Let $n>b>1$. Then $\diamond^{n-b}b = n$. But if $\diamond^{n-b}\not=\mathbf{w}$ while $\mathbf{w}(b)=n$ then $|\mathbf{w}|<n-b$.  \end{lem}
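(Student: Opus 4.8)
The first assertion is immediate: since $\diamond$ sends each value to its successor, applying it $n-b$ times to $b$ produces $b+(n-b)=n$, so $\diamond^{n-b}b=n$. For the second assertion the plan is to follow the running value as the letters of $\mathbf{w}$ are fed to $b$ one at a time, and to compare the total increase $n-b$ against the letter count $|\mathbf{w}|$. Since the effect of $\star$ is multiplicative while that of $\diamond$ is merely additive, I expect each use of $\star$ to overshoot a single $\diamond$, forcing any word that reaches $n$ while using a $\star$ to be strictly shorter than the all-$\diamond$ word.

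Concretely, I would write $\mathbf{w}=\ell_k\cdots\ell_2\ell_1$ with $k=|\mathbf{w}|$ and each $\ell_i\in\{\diamond,\star\}$ (the rightmost letter acting first, as in the Paradigm Examples), and set $x_0:=b$ and $x_i:=\ell_i x_{i-1}$, so that $x_k=\mathbf{w}b=n$. First I would check, by induction on $i$, that every $x_i\geq2$: this uses the standing choice $b\geq2$ made at the start of \S2 together with the fact that $\diamond$ and $\star$ both map $\{2,3,\ldots\}$ into itself. Next I would record the per-step increment: $x_i-x_{i-1}=1$ when $\ell_i=\diamond$, whereas $x_i-x_{i-1}=x_{i-1}^2\geq4$ when $\ell_i=\star$. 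Thus each step raises the value by at least $1$, and by strictly more than $1$ exactly when its letter is $\star$.

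Summing telescopically then gives $n-b=x_k-x_0=\sum_{i=1}^{k}(x_i-x_{i-1})\geq k=|\mathbf{w}|$, with equality possible only if every letter is $\diamond$. To finish I would note that the hypotheses $\mathbf{w}\neq\diamond^{n-b}$ and $\mathbf{w}b=n$ force $\mathbf{w}$ to contain at least one $\star$: a pure power $\diamond^m$ satisfies $\diamond^m b=b+m$, which equals $n$ only for $m=n-b$, i.e.\ only for $\mathbf{w}=\diamond^{n-b}$. Hence at least one increment exceeds $1$, the preceding inequality is strict, and $|\mathbf{w}|<n-b$. I do not foresee a genuine obstacle; the one point needing care is keeping every intermediate value at least $2$, which is precisely where $b\geq2$ is indispensable — and the statement genuinely fails at $b=1$, where $\diamond1=\star1=2$ exhibits a length-one word $\mathbf{w}=\star$ with $\star1=2=\diamond^{1}1$ yet $|\star|=1=n-b$.
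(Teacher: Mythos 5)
Your proof is correct and follows essentially the same route as the paper's: the paper's (much terser) argument simply asserts that any word containing a $\star$ reaches $n$ in fewer than $n-b$ steps, which is exactly what your telescoping-sum computation makes rigorous. Your additional observations --- that $\mathbf{w}\neq\diamond^{n-b}$ together with $\mathbf{w}b=n$ forces a $\star$ to appear, and that the standing convention $b\ge2$ is what keeps every $\star$-increment strictly greater than $1$ (the lemma genuinely failing at $b=1$) --- are exactly the details the paper leaves implicit.
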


\begin{proof}  Obviously $\diamond^{n-b}b=n$ and $|\diamond^{n-b}|=n-b$. It is also clear that if $\star$ is a letter in the word $\mathbf{w}$ then fewer than $n-b$ compositional steps are needed to reach $n$, since $\star(m) > m+1$ for all $m \geq 2$.  Thus $|\mathbf{w}|<n-b$.   \end{proof}

\begin{lem}\label{Lem2.3} Let $b\le k<k'\le m<\star(m)\le n<\star(m+1)$. Let $\mathbf{w}(b)=n={\mathbf w'}(b)$ where $\mathbf{w}=\diamond^{n-\star(k)}\star{\mathbf u}$ with ${\mathbf u}(b)=k$, and where 
${\mathbf w'}=\diamond^{n-\star(k')}\star{\mathbf u'}$ with ${\mathbf u'}(b)=k'$. Then $|{\mathbf w}|>|\mathbf{w'}|$.  \end{lem}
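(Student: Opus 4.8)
The plan is to read off the two lengths directly from the prescribed factorizations and then reduce the claim to a single clean inequality. Since ${\bf w} = \diamond^{n-\star k}\star{\bf u}$ is the concatenation of $n-\star k$ copies of $\diamond$, one copy of $\star$, and the word $\bf u$, we get $|{\bf w}| = (n-\star k) + 1 + |{\bf u}|$, and likewise $|{\bf w'}| = (n-\star k') + 1 + |{\bf u'}|$. Both exponents are nonnegative because $k,k'\le m$ forces $\star k,\star k'\le\star m\le n$. Subtracting, the $n$'s and the two lone $\star$'s cancel, leaving
\[ |{\bf w}| - |{\bf w'}| = (\star k' - \star k) + (|{\bf u}| - |{\bf u'}|). \]
So it suffices to show that the positive quantity $\star k' - \star k$ strictly exceeds $|{\bf u'}| - |{\bf u}|$.

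Next I would control the two subword lengths using Lemma~\ref{Lem2.2}. Since ${\bf u'}b = k'$ with $k' > k \ge b$, Lemma~\ref{Lem2.2} gives $|{\bf u'}| \le k' - b$ (the word $\diamond^{k'-b}$ realizing the extreme case), while trivially $|{\bf u}| \ge 0$. Hence $|{\bf u'}| - |{\bf u}| \le k' - b$, and it is enough to establish the cruder inequality $\star k' - \star k > k' - b$.

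The final step is a short computation that lets the quadratic growth of $\star$ do the work. Factoring, $\star k' - \star k = k'(k'+1) - k(k+1) = (k'-k)(k'+k+1)$. Since $k' > k$ the first factor is at least $1$, so $\star k' - \star k \ge k' + k + 1$; and because $b \ge 1$ and $k \ge b$ we have $(k'+k+1) - (k'-b) = k + b + 1 > 0$, giving $\star k' - \star k > k' - b$ as required. Chaining the three displays yields $|{\bf w}| - |{\bf w'}| > 0$.

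The one place that needs care — and the only spot where anything could go wrong — is the bound $|{\bf u'}| \le k' - b$: a priori $\bf u'$ could be long simply because $k'$ is large, and one must be sure that this potential excess is still outstripped by the gap $\star k' - \star k$ created by the extra leading $\diamond$'s in $\bf w$. This is exactly why Lemma~\ref{Lem2.2} is the right hammer: it caps every word reaching $k'$ at length $k' - b$, and the quadratic separation of the $\star$-values comfortably dominates that linear cap.
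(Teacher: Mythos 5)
Your proposal is correct and follows essentially the same route as the paper: the identical length formulas, the same reduction to $\star k'-\star k>|{\bf u'}|-|{\bf u}|$, and the same appeal to Lemma~\ref{Lem2.2} for the cap $|{\bf u'}|\le k'-b$, with only a cosmetic difference in the final arithmetic (you factor $\star k'-\star k=(k'-k)(k'+k+1)$ where the paper bounds it below by $2k'$). If anything, your use of $|{\bf u}|\ge 0$ is slightly more careful than the paper's closing strict inequality, which tacitly assumes $|{\bf u}|>0$.
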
 

\begin{proof} Since $|\mathbf{w}|=n-\star(k)+1+|{\mathbf u}|$ and $|{\mathbf w'}|= n-\star(k')+1+|{\mathbf u'}|$, it suffices to prove $-\star(k)+|{\mathbf u}|>-\star(k')+|{\mathbf u'}|$; i.e., that $\star (k')-\star(k) > |\mathbf{u'}|-|\mathbf{u}|$. 
But $k\le k'-1$, and $k'-b$ is by Lemma \ref{Lem2.2} the length of the longest word ${\mathbf v}$ for which ${\mathbf v}(b)=k'$. So $\star(k')-\star(k) = k'(k'+1)-k(k+1) \ge k'(k'+1)-(k'-1)k'=2k'>k'-b\ge|{\mathbf u'}|>|\mathbf{u'}|-|\mathbf{u}|$.  
\end{proof}

\begin{thm}\label{Th2.4} Let $\mathbf{w}(b)=n={\mathbf w'}(b)$ with $\mathbf{w} \not= \mathbf{w'}$. Then $|\mathbf{w}|\not=|\mathbf{w'}|$.  \end{thm}

\begin{proof} We will argue by induction on $n\ge b$. For $n=b$ the theorem is obvious. So pick $n>b$. Suppose for every $s \in [b,n-1]$ that, if $\mathbf{v} \not= \mathbf{v'}$ are words with ${\mathbf v}(b) = s = {\mathbf v'}(b)$, then 
$|\mathbf{v}|\not=|\mathbf{v'}|$. By Lemma \ref{Lem2.1} we can write $\mathbf{w}=\diamond^{n-\star(k)}\star{\mathbf u}$ and ${\mathbf w'}=\diamond^{n-\star(k')}\star{\mathbf u'}$ where ${\mathbf u}(b)=k$ and ${\mathbf u'}(b)=k'$. Without loss of generality 
$b\le k\le k'<n$. 

{\em Case:} $k=k'$. Then $|\mathbf{w'}|-|\mathbf{w}|=|\mathbf{u'}|-|\mathbf{u}|$. By the inductive hypothesis, $|\mathbf{u'}|=|\mathbf{u}|$ if and only if $\mathbf{u'}=\mathbf{u}$. But $\mathbf{u'}=\mathbf{u}$ if and only if $\mathbf{w'}=\mathbf{w}$ in the present Case. So $\mathbf{w'}=\mathbf{w}$ 
if and only if $|\mathbf{w'}|=|\mathbf{w}|$ in this Case. 

{\em Case:} $k<k'$. Then $|\mathbf{w}|>|\mathbf{w'}|$ by Lemma \ref{Lem2.3}. So in this Case too, if $\mathbf{w'} \not= \mathbf{w}$ then $|\mathbf{w'}|\not=|\mathbf{w}|$.     \end{proof} 

\begin{cor}\label{Cor2.5} For every integer $k\ge0$, the multiset ${\mathbf W}_k(b)$ is a simple set. So $|{\mathbf W}_k(b)| = |{\mathbf W}_k|=2^k$. \end{cor}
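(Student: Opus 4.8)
The plan is to recognize that this Corollary is essentially an immediate consequence of Theorem~\ref{Th2.4}, so that the entire argument reduces to a clean contrapositive application together with a routine count. First I would dispose of the cardinality equality $|{\bf W}_k|=2^k$: since ${\bf W}_k$ consists of exactly those words of length $k$ in the two-letter alphabet $\{\diamond,\star\}$, and each of the $k$ positions may be filled independently by either letter, a standard counting argument gives $|{\bf W}_k|=2^k$.

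Next I would reformulate the assertion that the multiset ${\bf W}_kb$ is a simple set as the injectivity of the evaluation map ${\bf w}\mapsto{\bf w}b$ restricted to ${\bf W}_k$. A multiset fails to be a simple set precisely when some value is attained with multiplicity exceeding one; here that would mean the existence of distinct words ${\bf w},{\bf w'}\in{\bf W}_k$ with ${\bf w}b={\bf w'}b$. Ruling this out is therefore the whole content of the simplicity claim.

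The heart of the argument is to forbid such a coincidence by invoking Theorem~\ref{Th2.4}. Suppose ${\bf w},{\bf w'}\in{\bf W}_k$ satisfy ${\bf w}b={\bf w'}b$. If ${\bf w\not=w'}$, then Theorem~\ref{Th2.4} forces ${\bf |w|\not=|w'|}$; but both words have length exactly $k$ by membership in ${\bf W}_k$, a contradiction. Hence ${\bf w=w'}$, so the evaluation map is injective on ${\bf W}_k$. It follows that the multiset ${\bf W}_kb$ carries no repeated elements, i.e., it is a simple set, and its cardinality therefore equals that of its domain, yielding $|{\bf W}_kb|=|{\bf W}_k|=2^k$.

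I expect essentially no obstacle here: all the substantive work was already carried out in Theorem~\ref{Th2.4} and its supporting Lemmas~\ref{Lem2.1}, \ref{Lem2.2}, and \ref{Lem2.3}. The only point requiring any care is the bookkeeping distinction between a word ${\bf w}$ and the integer ${\bf w}b$ that it produces: the Corollary asserts that, once the length $k$ is fixed, this distinction collapses, so that counting words of length $k$ is the same as counting their images under evaluation at $b$.
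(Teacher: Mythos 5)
Your proposal is correct and is precisely the argument the paper intends: the paper states Corollary~\ref{Cor2.5} without proof as an immediate consequence of Theorem~\ref{Th2.4}, and your contrapositive reading (distinct words of equal length $k$ cannot evaluate to the same integer at $b$, so ${\bf w}\mapsto{\bf w}b$ is injective on ${\bf W}_k$) together with the count $|{\bf W}_k|=2^k$ is exactly that consequence spelled out. No gap and no divergence from the paper's route.
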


\begin{lem}\label{Lem2.6} If $k$ is a nonnegative integer then $\sigma({\mathbf W}_k(b))=1/b$.  \end{lem}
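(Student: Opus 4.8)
The plan is to induct on $k\ge0$, using the Vital Identity as the engine that fuses each pair of offspring back into its parent. For the base case $k=0$, the only word of length zero is the empty word, which acts as the identity permutation on ${\mathbb N}$, so ${\bf W}_0b=\{b\}$ and hence $\sigma({\bf W}_0b)=\sigma\{b\}=1/b$. This disposes of the anchor with no real content.

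For the inductive step I would assume $\sigma({\bf W}_kb)=1/b$ and pass to length $k+1$ by peeling off the \emph{first} (outermost) letter of each word. Every ${\bf w'}\in{\bf W}_{k+1}$ is uniquely either $\diamond{\bf w}$ or $\star{\bf w}$ for some ${\bf w}\in{\bf W}_k$, and since the leftmost letter is the last function applied we have ${\bf w'}b=\diamond({\bf w}b)$ or ${\bf w'}b=\star({\bf w}b)$ accordingly. Thus the multiset ${\bf W}_{k+1}b$ consists exactly of the integers $\diamond({\bf w}b)$ and $\star({\bf w}b)$ as ${\bf w}$ ranges over ${\bf W}_k$. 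The crucial move is then the Vital Identity applied at $n={\bf w}b$, which gives $\frac{1}{\diamond n}+\frac{1}{\star n}=\frac{1}{n+1}+\frac{1}{n(n+1)}=\frac{1}{n}$. Summing over ${\bf w}\in{\bf W}_k$ collapses each $\diamond$--$\star$ sibling pair back into the single term $1/({\bf w}b)$, leaving $\sum_{{\bf w}\in{\bf W}_k}1/({\bf w}b)=\sigma({\bf W}_kb)=1/b$ by the inductive hypothesis. Note that peeling off the \emph{last} letter would not work here, since that letter is applied to $b$ first and the Vital Identity would not be positioned to telescope.

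The step that requires care — and the only genuine obstacle — is the passage between summing over words and evaluating $\sigma$, which is by definition a sum of $1/m$ over a set of \emph{distinct} integers $m$. A priori two distinct words could produce the same integer, so the multiset ${\bf W}_{k+1}b$ might collapse and $\sigma$ would undercount relative to $\sum_{\bf w}1/({\bf w}b)$. This is exactly where I would invoke Corollary \ref{Cor2.5}: because we fixed $b\ge2$, both ${\bf W}_kb$ and ${\bf W}_{k+1}b$ are simple sets, so no two of the $2^{k+1}$ offspring coincide (in particular no $\diamond$-child equals a $\star$-child), and the sum over words agrees termwise with $\sigma$ of the underlying set. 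With that identification in hand the computation of the previous paragraph is legitimate and the induction closes.
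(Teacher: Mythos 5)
Your proof is correct and follows essentially the same route as the paper's: induction on $k$, decomposing ${\bf W}_{k+1}b$ into the sibling pairs $\{\diamond n,\star n\}$ for $n\in{\bf W}_kb$, and collapsing each pair via the Vital Identity back to $1/n$. If anything, you are more explicit than the paper, which asserts the pairwise disjointness of the family of sibling pairs without citation, whereas you correctly pin that (and the multiset-versus-set issue) on Corollary \ref{Cor2.5}.
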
 

\begin{proof} We induce on $k\ge0$. {\em Basis Step:} $\sigma({\mathbf W}_0(b)) = \sigma\{b\} = 1/b$. {\em Inductive Step:} Pick $k\ge0$, and suppose that $\sigma({\mathbf W}_k(b))=1/b$. The family ${\cal S} := 
\{\{\diamond{\mathbf v}(b),\star{\mathbf v}(b)\}:{\mathbf v} \in {\mathbf W}_k(b) \}$ 
is pairwise disjoint by Corollary \ref{Cor2.5}, and $\bigcup{\cal S}={\mathbf W}_{k+1}(b)$. But the Vital Identity implies that $\sigma\{\diamond{\mathbf v}b,\star{\mathbf v}b\} = \sigma\{{\mathbf v}b\}$ 
for every ${\mathbf v \in W}$. Hence $\sigma({\mathbf W}_{k+1}(b))=\sigma({\mathbf W}_k(b))=1/b$.   \end{proof}

\begin{cor}\label{Cor2.7} Let $b\ge2$. Then there exists an infinite pairwise disjoint family ${\cal G}_{1/b}\subseteq{\cal E}$ such that $\sigma S = 1/b$ for every $S \in {\cal G}_{1/b}$.  \end{cor}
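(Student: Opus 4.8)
The plan is to extract $\mathcal{G}_{1/b}$ as a subfamily of the sets $\{{\bf W}_kb : k\ge 0\}$. By Corollary \ref{Cor2.5} each multiset ${\bf W}_kb$ is in fact a genuine set of $2^k$ distinct positive integers, so ${\bf W}_kb\in{\cal F}$; and by Lemma \ref{Lem2.6} each of these sets satisfies $\sigma({\bf W}_kb)=1/b$. Thus $\sigma$ already takes the constant value $1/b$ on the entire sequence $({\bf W}_kb)_{k\ge0}$, and the only thing left to arrange is pairwise disjointness, since distinct terms of this sequence may well share elements.

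First I would pin down the extreme elements of ${\bf W}_kb$. Because $\diamond n=n+1\le n(n+1)=\star n$ for every $n\ge1$, and because both $\diamond$ and $\star$ are strictly increasing, a routine induction on word length shows that among all values ${\bf w}b$ with $|{\bf w}|=k$ the word $\diamond^k$ yields the least and the word $\star^k$ yields the greatest. Hence $\min({\bf W}_kb)=\diamond^kb=b+k$ and $\max({\bf W}_kb)=\star^kb$, so the whole set ${\bf W}_kb$ is trapped in the interval $[\,b+k,\ \star^kb\,]$.

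Next I would build, by recursion, a strictly increasing sequence of indices $k_0<k_1<k_2<\cdots$ for which these trapping intervals are disjoint and increasing. Set $k_0:=0$. Having chosen $k_i$, the integer $\star^{k_i}b$ is fixed and finite, so I may take $k_{i+1}:=\star^{k_i}b$; then $b+k_{i+1}>\star^{k_i}b$, which says exactly that $\min({\bf W}_{k_{i+1}}b)>\max({\bf W}_{k_i}b)$, and also $k_{i+1}=\star^{k_i}b\ge b+k_i>k_i$, so the sequence does increase. Since the left endpoints $b+k$ increase with $k$, for $i<j$ we get $\max({\bf W}_{k_i}b)<\min({\bf W}_{k_{i+1}}b)\le\min({\bf W}_{k_j}b)$, whence the sets ${\bf W}_{k_i}b$ are pairwise disjoint. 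Declaring ${\cal G}_{1/b}:=\{{\bf W}_{k_i}b:i\ge0\}$ then finishes the proof: it is pairwise disjoint, each member lies in ${\cal F}$ with $\sigma$-value $1/b$, and it is infinite because its members are nonempty and pairwise disjoint (equivalently, they have distinct cardinalities $2^{k_i}$).

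I do not anticipate any serious obstacle: the single point needing a word of care is the identification of $\min$ and $\max$, and even that reduces to the monotonicity of $\diamond,\star$ together with the inequality $\diamond n\le\star n$. The remainder is just the bookkeeping of a greedy interval-separation argument, exploiting that $\star^{k}b$ grows explosively (indeed $\star^kb>b^{2^k}$) so that a fresh, far-out set is always available.
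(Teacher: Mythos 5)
Your proposal is correct and follows essentially the same route as the paper: invoke Corollary \ref{Cor2.5} and Lemma \ref{Lem2.6}, identify $\min{\bf W}_kb=\diamond^kb=b+k$ and $\max{\bf W}_kb=\star^kb$, and recursively choose indices $k_{i+1}$ just beyond $\star^{k_i}b$ (the paper takes $k_{i+1}:=1+\star^{k_i}b$, you take $\star^{k_i}b$; both work) so that the resulting sets occupy disjoint intervals. Your explicit induction justifying the min/max identification is a detail the paper leaves as ``notice,'' but the argument is the same.
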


\begin{proof} Let $k_1:=0$. Pick $j\ge0$, and suppose for each integer $i \in [0,j]$ that the integer $k_i$ has been chosen so that $k_1<k_2<\cdots<k_j$ and such that the family 
$\{{\mathbf W}_{k_1}(b),{\mathbf W}_{k_2}(b),\ldots, {\mathbf W}_{k_j}(b)\}\subseteq{\cal E}$ is pairwise disjoint. Notice for each integer $t\ge2$ that $\diamond^t(b)=\min{\mathbf W}_t(b)<\max{\mathbf W}_t(b)=\star^t(b)$. 
So ${\mathbf W}_t(b)\cap\bigcup\{{\mathbf W}_{k_i}(b) :i \in [1,j]\}=\emptyset$ if $t>\star^{k_j}(b)$. We therefore can define $k_{j+1} := 1+\star^{k_j}(b)$ with the assurance that then the family 
$\{{\mathbf W}_{k_i}(b): i \in [1,j+1]\}\subseteq{\cal E}$ is pairwise disjoint. Let ${\cal G}_{1/b} := \{{\mathbf W}_{k_i}(b):i \in {\mathbb N}\}$. The corollary follows by Corollary \ref{Cor2.5} and 
Lemma \ref{Lem2.6}.    \end{proof}

It is now easy to finish establishing Theorem \ref{Main1}:

\begin{proof} We diminish clutter by writing $B_i := {\mathbf W}_{k_i}(b)$ for the ${\mathbf W}_{k_i}(b)$ in our proof of Corollary \ref{Cor2.7}, and letting ${\cal G}_{1/b} := \{B_1,B_2,\ldots\}$ be as 
promised by Corollary \ref{Cor2.7}. Recalling that $r=a/b$ and that $\sigma B_i=1/b$ for every $i$, we partition the set ${\cal G}_{1/b}$ into a family of $a$-membered subsets; e.g., 
this family
 could be $\{{\cal C}_1,{\cal C}_2,\ldots\}$ where ${\cal C}_1 :=\{B_1,B_2,\ldots, B_a\},\, {\cal C}_2:=\{B_{a+1},B_{a+2}\ldots, B_{2a}\},\, {\cal C}_3:=\{B_{2a+1},B_{2a+2},\ldots, B_{3a}\},\ldots$  Let 
$D_k := \bigcup{\cal C}_k$ for each $k \in {\mathbb N}$, and define an infinite pairwise disjoint subfamily ${\cal H}_{a/b} :=\{D_1,D_2,\ldots\}\subseteq{\cal E}$. Since obviously $\sigma D_i = a/b = r$ for every 
$i \in {\mathbb N}$, Theorem \ref{Main1} is established. \end{proof}

Reviewing the argument above, we notice that there are infinitely many ways to partition the set ${\cal G} _{1/b} $ into a family of $a$-membered subsets, and thus to obtain alternative families of 
$a$-membered sets whose unions comprise the membership of other candidates to the title ${\cal H}_{a/b}$ besides the family to which we have given that name. That is to say, the 
Vital Identity confers on us an ability to produce infinitely many subfamilies of ${\cal E}$, any one of which could legitimately be called ${\cal H}_{a/b}$. Of course all of these candidates 
are subfamilies of ${\cal E}_r := \{S:r=\sigma S\}\cap{\cal E}$ --- given, as we are, that $r=a/b$. Moreover, there are further encumberances to a specification of every possible ${\cal E}_r$.  
We now glance at a few of them.\vspace{.5em}

First, other 
algorithms may yield members of ${\cal E}_r$ which the Vital Identity cannot provide.  One example is the greedy algorithm that keeps subtracting the largest possible element of $\{ 1/n : n \in \mathbb{N}
\}$ from $r$ until nothing remains.
\vspace{.5em}

Second, all of the families ${\cal E}_{a/b}$ remarked in the preceding paragraph utilized only the expression of the rational $r$ as its fractional form, $a/b$ for a specific pair $\langle a,b\rangle$ 
with $b\ge2$. But $r = a'/b'$ for infinitely many $\langle a',b'\rangle \in {\mathbb N}\times{\mathbb N}$. Each of these $\langle a',b'\rangle$ provides additional families of finite sets 
$S\subseteq{\mathbb N}$ for which $r=\sigma S$.\vspace{.5em}

Third, there are other procedures, besides the one elaborated in the propositions proved above, whereby for $s \in {\mathbb Q}^+$ the Vital Identity uncovers infinite subfamilies of ${\cal E}_s$. One such 
of these alternative procedures involves the generation -- from an arbitrary ``seed'' $A_1 \in {\cal E}$ -- of an infinite sequence $\langle A_i\rangle_{i=1}^\infty$ of 
sets whose terms we take pains to 
make simple. Indeed, we arrange for $\langle A_i\rangle_{i=1}^\infty$ to be a sequence in ${\cal E}_{\sigma(A_1)}$. 

It is our guess that each such sequence has an infinite subsequence, the family of whose terms is pairwise disjoint. If our guess gets verified, then a duplication of the final portion of our proof above of 
Theorem \ref{Main1} will provide another route to that theorem, via a different class of pairwise disjoint subfamilies of ${\cal E}_s$. 

Anyway, these sequences of sets are sufficiently interesting to justify our briefly laying the groundwork for their future study. Moreover, they do give us new infinite subfamilies of ${\cal E}_s$.\vspace{.3em}

Call an integer $r_i$ {\em replaceable for} $A_i$ iff $r_i$ is the least element $x \in A_i$ such that $\{\diamond x,\star x\}\cap A_i=\emptyset$. Plainly each $A_i$ contains exactly one replaceable element. 
The recursion that generates $\langle A_i\rangle_{i=1}^\infty$ is given by $A_{i+1} := (A_i\setminus\{r_i\})\cup\{\diamond r_i,\star r_i\}$. Of course $|A_{i+1}|=|A_i|+1$. By the Vital Identity, 
$\sigma A_i=\sigma A_1$ for all $i \in {\mathbb N}$. We refer to $\langle A_i\rangle_{i=1}^\infty$ as the {\em $\sigma$-sequence from seed $A_1$}. It is obvious that each such sequence is  infinite. 

\begin{Exmp} To identify the replaceable element $r_i$ for $A_i := \{3,4,5,10,12,30\}$ we work upward from $\min A_i$.  We see that $r_i\not=3$ because $\{\diamond3,\star3\}=
\{4,12\}\subseteq A_i$, and $r_i\not=4$ because $\diamond4=5 \in A_i$, and $r_i\not=5$ because $\star5=30 \in A_i$. So $r_i=10$, since $\{\diamond10,\star10\} = \{11,110\}$ while $\{11,110\}\cap A_i=\emptyset$. 
The fact that $\{\diamond12,\star12\}=\{13,156\}$ while $\{13,156\}\cap A_i=\emptyset$ nominates $12$ as a candidate for $r_i$; but $12$ loses the election to $10$ since $10<12$.  Candidate $30$ gets even fewer 
votes than $12$ got. So $A_{i+1} = \{3,4,5,11,12,30,110\}$.\end{Exmp}

If $r_i = \min A_i$, and if $r_i$ is replaceable, then we say that $r_i$ is {\em doomed} in $A_i$. Since our sequence-generating recursion never introduces into $A_{i+1}$ an integer smaller than $\min A_i$, the 
sequence $\langle \min A_i\rangle_{i=1}^\infty$ is nondecreasing. If $d$ is doomed in $A_i$ then $d = \min A_i<\min A_{i+1}$, and $d\notin A_j$ for all $j>i$. If $\min A_i$ is not doomed in $A_i$ then surely 
$\min A_{i+1}=\min A_i$. Clearly, our guess above is equivalent to our surmise that $\lim_{i\rightarrow\infty}\min A_i = \infty$.\vspace{.3em}

En route to our guess that every $\sigma$-sequence $\langle A_j\rangle_{j=1}^\infty$ has an infinite pairwise disjoint subsequence $\langle A_{j_i}\rangle_{i=1}^\infty$, we experimented with the recursion 
operating from several different seed sets. We report on the nondefinitive results we got with the seed $A_1 := \{2\}$. The first six terms of this sequence are: $A_{j_1} = A_1 = \{2\};\, 
A_{j_2}=A_2=\{3,6\};\,A_3=\{4,6,12\};\, A_4=\{5,6,12,20\};\,A_{j_3}=A_5=\{5,7,12,20,42\};\,A_6=\{6,7,12,20,30,42\}$. In order to reach a secure $A_{j_4}$ we must have that 
$\min A_{j_4}>\max(A_{j_1}\cup A_{j_2}\cup A_{j_3})=\max\{2,3,5,6,7,12,20,42\}$. Since the first term of $\langle A_j\rangle$ with $\min A_j=7$ is $A_{27}$, we expect hours of 
pen work before $A_{j_4}$ is reached.  A kid with a computer would help. But, even Hal may drag its electronic feet before giving us, say\ldots \ $A_{j_{10^6}}$.\vspace{.3em} 

How fast does the integer sequence $\langle j_i\rangle=\langle 1,2,5,\ldots\rangle$ increase? We believe the sequence is infinite. Is it?\vspace{.5em}

\noindent{\bf Problem.} Considered in these three lights, an exhaustive treatment of the full family ${\cal E}_r$ remains at issue. One would like to recognize all of the $S \in {\cal E}$ 
for which it must happen that $r=\sigma S$. We do not have this information even in the restricted case that $r \in {\mathbb N}$. Indeed, it would be germane to know this for $r=1$. \vspace{.5em} 

\noindent{\bf Conjecture.} When $r=a/b \in {\mathbb Q}^+$ with $a$ and $b\ge1$ coprime, and when $c \in {\mathbb N}$, then there is a partition ${\cal U}_{\langle r,c\rangle}\subseteq{\cal E}$ 
of ${\mathbb N}\cap[cb,\infty)$ such that $\sigma S = r$ for every $S\in {\cal U}_{\langle r,c\rangle}$. This would strengthen Theorem \ref{Main1}.

\section{Injectivity revisited}

Lower case Greek letters always denote functions of a set variable, except where those symbols may be highjacked to designate numerical values assumed by such functions. For instance, the numbers  
$\sigma X$ and $\sigma X'$ may sometimes be abbreviated to $\sigma$ and $\sigma'$, respectively, when context obviates ambiguity.

Recall that the function  $\sigma:{\cal E}\rightarrow{\Bbb Q}^+$ induces two other functions, $\nu:{\cal E}\rightarrow{\Bbb N}$ and $\delta:{\cal E}\rightarrow{\Bbb N}$, via the fact that 
$\sigma X = \nu X/\delta X$ for a unique coprime pair $\nu X$ and $\delta X$ of positive integers. 

The least common multiple $\mu X$ of the integers in $X$ is useful for our project, since $\mu X/x$ is an integer for each $x \in X$, and so $\sigma X\cdot \mu X$ is an integer. Thus the equality 
$\sigma = \sigma\mu/\mu$ provides an easy presentation of $\sigma X$ as a fraction of integers.  Of course the lowest terms reduction of the fraction $\sigma\mu/\mu$ is $\nu/\delta$.\vspace{.5em}
   
We write $m|n$ to state that $m$ divides $n$. For $\{m,n\}\subseteq{\mathbb N}$ and $v\ge0$, the expression $m^v\|n$ is read ``$\,m^v$ {\em exactly divides} $n$'', and means that both 
$m^v|n$ and $m^{v+1} \not| \; n$.\vspace{.5em} 

We evoke two classic results, both of which are proved in \cite{Erdos2}. The first was conjectured by J. Bertrand but established by P. Chebyshev. The second, due to 
J. J. Sylvester \cite{Sylvester}, extends the first. 

The following fact is known either as Bertrand's Postulate or as Chebyshev's Theorem.   

\begin{thm} {\bf (Chebyshev)} \label{Chebyshev} If $n \ge 2$ then there is a prime $p$ such that $n < p < 2n$. \end{thm}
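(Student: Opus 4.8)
The plan is to reproduce Erdős's elementary argument, which wrings the conclusion out of a close examination of the central binomial coefficient $\binom{2n}{n}$. The strategy is to assume, toward a contradiction, that \emph{no} prime $p$ satisfies $n<p<2n$, and then to show that this assumption squeezes the size of $\binom{2n}{n}$ below a clean lower bound that it is known to exceed. The whole argument is thus a tension between an easy lower estimate for $\binom{2n}{n}$ and an upper estimate assembled from its prime factorization.

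First I would record the lower bound $\binom{2n}{n}\ge 4^n/(2n)$, valid for $n\ge1$: grouping the two unit end terms of the expansion of $(1+1)^{2n}=4^n$ and bounding each of the remaining $2n-1$ summands by the maximal one $\binom{2n}{n}$ gives $4^n\le 2n\binom{2n}{n}$. Next I would control the prime content of $\binom{2n}{n}$ through Legendre's formula: writing $R(p)$ for the exponent of $p$ in $\binom{2n}{n}$, one has $R(p)=\sum_{j\ge1}\bigl(\lfloor 2n/p^j\rfloor-2\lfloor n/p^j\rfloor\bigr)$, where each summand lies in $\{0,1\}$ and all vanish once $p^j>2n$; hence $p^{R(p)}\le 2n$. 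The same formula, applied to primes with $\frac{2n}{3}<p\le n$ (where only the $j=1$ term survives and equals $2-2\cdot1=0$), shows that such primes do not divide $\binom{2n}{n}$ at all.

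Under the contradiction hypothesis no prime in the interval $(n,2n)$ divides $\binom{2n}{n}$ either, so every prime factor is at most $\tfrac{2n}{3}$. I would then split the product over its prime factors into two ranges. The primes $p\le\sqrt{2n}$ number at most $\sqrt{2n}$, and each contributes a prime power $\le 2n$, giving at most $(2n)^{\sqrt{2n}}$; the primes $\sqrt{2n}<p\le\tfrac{2n}{3}$ each contribute only $p$ to the first power, so their product is bounded by the primorial estimate $\prod_{p\le x}p<4^{x}$ with $x=\tfrac{2n}{3}$. Combining with the lower bound yields $4^n/(2n)\le (2n)^{\sqrt{2n}}\cdot 4^{2n/3}$, that is $4^{n/3}\le(2n)^{1+\sqrt{2n}}$, whose left side grows exponentially while its right side grows only sub-exponentially, so it fails once $n$ exceeds an explicit threshold.

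I expect the primorial bound $\prod_{p\le x}p<4^{x}$ to be the main obstacle, and I would prove it separately by strong induction: the even and non-prime cases reduce to a smaller $x$, while for odd $x=2m+1$ one uses that the product of the primes in $(m+1,2m+1]$ divides $\binom{2m+1}{m}$ together with $\binom{2m+1}{m}\le 4^{m}$ (the two equal coefficients $\binom{2m+1}{m}=\binom{2m+1}{m+1}$ are distinct summands of $2^{2m+1}$), giving the step $\prod_{p\le 2m+1}p\le 4^{m+1}\cdot 4^{m}=4^{2m+1}$. The remaining gap is that the final inequality only contradicts the hypothesis beyond the threshold, so I would dispatch the finitely many small $n$ by Landau's device, exhibiting a short chain of primes $2,3,5,7,13,23,43,83,163,317,631$ in which each term is less than twice its predecessor; every $n$ below the threshold then lands strictly between two consecutive members of the chain, furnishing the required prime in $(n,2n)$.
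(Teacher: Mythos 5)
The paper never proves this statement: Theorem \ref{Chebyshev} is invoked as a classical result (Bertrand's Postulate), with the proof delegated to the cited Erd\H{o}s paper \cite{Erdos2}. So there is no internal proof to compare against; what you have supplied is, in effect, the content of that citation, and your outline is the standard Erd\H{o}s argument, correctly assembled: the lower bound $4^n\le 2n\binom{2n}{n}$, the Legendre-formula bound $p^{R(p)}\le 2n$, the vanishing of the exponent for $\tfrac{2n}{3}<p\le n$, the primorial estimate $\prod_{p\le x}p<4^{x}$ via $\binom{2m+1}{m}\le 4^{m}$, and Landau's chain of primes for the small cases. Two quantitative points need attention before this is airtight. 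First, your chain $2,3,5,7,13,23,43,83,163,317,631$ only covers $n\le 630$, so you must show that the inequality $4^{n/3}\le(2n)^{1+\sqrt{2n}}$ fails for \emph{all} $n\ge 631$; "exponential beats subexponential" gives failure eventually, but pinning the threshold below $631$ requires an actual monotonicity computation (it does fail from roughly $n\approx 468$ onward, so this is fine, but it must be done). The commonly quoted lazier threshold $n\ge 4000$ would force you to extend the chain by $1259$, $2503$, $4001$. Second, the claim that primes with $\tfrac{2n}{3}<p\le n$ do not divide $\binom{2n}{n}$ uses $p^{2}>2n$ to kill the terms with $j\ge 2$, which holds only for $n\ge 5$; this is harmless since small $n$ are absorbed by the chain, but it should be said explicitly.
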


We also have the following.

\begin{thm} {\bf (Sylvester)} \label{Sylvester} 
If $k$ and $n$ are natural numbers with $k < n$, then there is a prime $p$ greater than $k$ that divides the product 
$n(n+1)(n+2) \dots (n+k-1)$.
\end{thm} 

These theorems give us the following corollary.

\begin{cor} \label{Sylvester corollary}
Let $1\le m<n$ be integers. Then there exists a power $p^v\ge2$ of a prime $p$ such that $p^v\|\mu[m,n]$ but also such that $p^v$ divides one and only one element $x\in[m,n]$. Moreover, $p^v\|x$. 
\end{cor}
  
\begin{proof} Let $m < n$ as above.

Assume $n \geq 2m$.  We let $k$ be the largest even number with $k \leq n$ and have $k \geq m$.  Then Theorem \ref{Chebyshev} gives us a prime $p$ with $m \leq k < p < 2k \leq n$, and we take $v = 1$, so $p^v | \mu[m,n]$. We have $k+1 \leq p$, so $2p \geq 2k + 2 > n$, showing that there is only one $x \in [m,n]$ with $p^v | x$.

If $n < 2m$, we take $k=n-m$ and use Theorem \ref{Sylvester} to get a prime $p  > k$ that divides $(m+1) (m+2) \dots (m + k) = (m+1)(m+2) \dots n$.  We again take $v = 1$, and have $p^v | \mu[m,n]$.  Since $p > k$, $p^v$ is the only $x \in [m,n]$ with $p^v | x$. \end{proof}

This corollary has legs:

\begin{defn}\label{sylvesdefine} For $X \in {\cal E}$, when $v \in {\Bbb N}$ and $p$ is a prime integer, we call $p^v$  a {\em sylvester power} for $X$ iff $p^v\|\mu X$ while $p^v|x$ for exactly one 
$x \in X$. The expression ${\mathbf S}(X)$ denotes the set of all sylvester powers for $X$. \end{defn} 

If $p^v\|\mu X$ while $p^v>\max X-\min X$, then surely $p^v \in {\mathbf S}(X)$. We proceed to set the stage.\vspace{.5em} 

\begin{Exmp} $[1000,1004] = \{1000,1001,1002,1003,1004\}=\{2^3\cdot5^3,7\cdot11\cdot13,2\cdot3\cdot167,17\cdot59,2^2\cdot251\}$.
Thus, the set of sylvester powers for this interval is ${\mathbf S}[1000,1004] = \{2^3,5^3,7,11,13,17,59,167,251\}$. The sylvester powers for an interval can be numerous.\end{Exmp} 

If $1<m<n$ then $|{\mathbf S}[m,n]|\ge2$, and indeed $2^v \in {\mathbf S}[m,n]$ for some $v \in {\mathbb N}$. The latter fact comes from 

\begin{lem}\label{2andmu} Let $1\le m<n$ be integers. Then there exists $2^v\in{\bf S}([m,n])$.  Moreover, $2^{v+1} > |[m,n]|$. \end{lem}

\begin{proof} Surely $2^v\|\mu[m,n]$ for some $v\ge 1$. So $2^v\|x$ for some $x\in[m,n]$, and $x=2^va$ for an odd integer $a$. But $2^va+2^v = 2^v(a+1)$ is the smallest multiple of $2^v$ greater than $x$. So $2^{v+1}|x+2^v$ since $a+1$ is even. Hence $x+2^v > n$, since otherwise $2^{v+1}|\mu[m,n]$ contrary to $2^v\|\mu[m,n]$. Similarly, $x-2^v < m$ since  $2^{v+1}|(x-2^v)$. Thus $2^v\in{\bf S}([m,n])$, and $2^{v+1} = x+2^v-(x-2^v) \ge n-m+2 > n-m+1  = |[m,n]|$. So $2^{v+1} > |[m,n]|$. \end{proof}

\begin{lem}\label{sylves1}   Let $X \in {\cal E}$, let $p$ be prime, let $v \in {\mathbb N}$, and let $p^v \in {\mathbf S}(X)$. Then $p^v\|\delta X$.\end{lem}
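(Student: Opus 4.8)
The plan is to reduce the whole statement to a single $p$-adic divisibility check. First I would set $M := \mu X$ and $N := \sigma X\cdot M = \sum_{x\in X} M/x$, which is an integer because $x\mid M$ for every $x\in X$. Then $\sigma X = N/M$, so reducing to lowest terms gives $\delta X = M/\gcd(N,M)$. Writing $v_p(k)$ for the exponent of $p$ in the factorization of $k$, this yields $v_p(\delta X) = v_p(M) - \min\{v_p(N),v_p(M)\}$. Since $p^v\in{\bf S}(X)$ forces $p^v\|\mu X = M$, we have $v_p(M)=v$, and so the desired conclusion $p^v\|\delta X$ is equivalent to the single assertion $p\nmid N$, i.e. $v_p(N)=0$.

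The substance of the lemma is therefore to show $p\nmid N$, and here I would use both defining clauses of a sylvester power. Let $x_0$ be the unique element of $X$ with $p^v\mid x_0$. Because $x_0\mid M$ we have $v_p(x_0)\le v_p(M)=v$, while $p^v\mid x_0$ gives $v_p(x_0)\ge v$; hence $v_p(x_0)=v$ exactly, so $v_p(M/x_0)=0$ and $p\nmid M/x_0$. For every other $x\in X$, the uniqueness clause says $p^v\nmid x$, so $v_p(x)\le v-1$ and thus $v_p(M/x)=v-v_p(x)\ge1$, i.e. $p\mid M/x$. Summing, $N = M/x_0 + \sum_{x\ne x_0} M/x \equiv M/x_0 \not\equiv 0 \pmod p$, so $v_p(N)=0$. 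Combined with the first paragraph this gives $v_p(\delta X)=v$, that is, $p^v\|\delta X$.

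I expect the only real content to lie in the middle step: the observation that exactly one summand $M/x$ is a $p$-adic unit while all the others are divisible by $p$. Everything else is bookkeeping about valuations of a reduced fraction. The point to handle carefully is that the uniqueness built into the definition of a sylvester power pins down $v_p(x_0)=v$ \emph{precisely} (not merely $\ge v$), which is exactly what makes $M/x_0$, and hence $N$, coprime to $p$; without the $p^v\|\mu X$ clause forcing $v_p(M)=v$, one could only conclude $v_p(x_0)\ge v$ and the argument would stall.
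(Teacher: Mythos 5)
Your proposal is correct and follows essentially the same route as the paper's own proof: both express $\sigma X$ as $\bigl(\sum_{t\in X}\mu X/t\bigr)/\mu X$, observe that the unique multiple $x_0$ of $p^v$ in $X$ yields the one summand $\mu X/x_0$ coprime to $p$ while every other summand is divisible by $p$, and conclude that the numerator is coprime to $p$, so the full power $p^v\|\mu X$ survives into $\delta X$. Your explicit $p$-adic valuation bookkeeping merely spells out what the paper compresses into ``So $\nu X$ is coprime to $p$. The lemma follows.''
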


\begin{proof} Recall that $\sigma X= \sigma X\cdot\mu X/\mu X = \nu X/\delta X$ where $\nu X$ and $\delta X$ are coprime. Since $p^v$ is sylvester for $X$, there is a unique multiple $x$ of $p^v$ in $X$. 
Then $p|(\mu X/z)$ for all $z \in X\setminus\{x\}$, but  $\mu X/x$ is coprime to $p$. Therefore $\sigma X\cdot\mu X = \sum\{\mu X/t: t \in X\}$ is coprime to $p$. So $\nu X$ is coprime to $p$. The lemma 
follows.   \end{proof}

\begin{thm}\label{sylves4} For $\{X,Y\} \subseteq {\cal E}$ and $v \in {\mathbb N}$, let $p^v \in {\mathbf S}(X)\setminus{\mathbf S}(Y)$ with $p^v > \max Y - \min Y$. 
Then $\delta X \not= \delta Y$, and so $\sigma X \not= \sigma Y$. \end{thm}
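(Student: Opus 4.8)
The plan is to reduce everything to a comparison of the exact power of $p$ dividing $\delta X$ against the one dividing $\delta Y$. By Lemma \ref{sylves1}, the hypothesis $p^v\in{\bf S}(X)$ immediately yields $p^v\|\delta X$. Hence it suffices to show that $p^v$ does \emph{not} exactly divide $\delta Y$; for then $\delta X\not=\delta Y$, and since a positive rational has a unique lowest-terms representation, $\sigma X=\nu X/\delta X\not=\nu Y/\delta Y=\sigma Y$ follows at once. Throughout I would lean on the elementary fact, noted just before the statement of Chebyshev's theorem, that $\delta Y\mid\mu Y$: this holds because $\sigma Y=\sigma Y\mu Y/\mu Y$ exhibits $\sigma Y$ as an integer over $\mu Y$, and reducing to lowest terms can only shrink the denominator's divisors.

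The decisive use of the hypothesis $p^v>\max Y-\min Y$ is this: any two distinct multiples of $p^v$ differ by at least $p^v$, which exceeds the diameter of $Y$, so \emph{at most one} element of $Y$ is divisible by $p^v$. Writing $w$ for the exponent with $p^w\|\mu Y$ (equivalently, the largest power of $p$ dividing any single element of $Y$), I would then argue by cases on $w$. If $w<v$, then since $\delta Y\mid\mu Y$ the exact power of $p$ in $\delta Y$ is at most $w<v$, so $p^v\not\|\delta Y$ and this case is finished.

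The substantive case is $w\ge v$. Here the element of $Y$ realizing the $p$-power $w$ is in particular a multiple of $p^v$, so by the size hypothesis it is the \emph{unique} such multiple in $Y$; being the only candidate, it is likewise the unique multiple of $p^w$ in $Y$. Thus $p^w$ meets both clauses of Definition \ref{sylvesdefine}---namely $p^w\|\mu Y$ by the choice of $w$, and $p^w$ divides exactly one element of $Y$---so $p^w\in{\bf S}(Y)$, and Lemma \ref{sylves1} gives $p^w\|\delta Y$. It remains only to rule out $w=v$: were $w=v$, the very same computation would place $p^v$ in ${\bf S}(Y)$, contradicting $p^v\in{\bf S}(X)\setminus{\bf S}(Y)$. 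Hence $w>v$, and so the exact power of $p$ dividing $\delta Y$ is $w\not=v$, which completes the case and the proof.

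The main obstacle is less a calculation than a point of recognition: one must see that the failure of $p^v$ to be sylvester for $Y$, \emph{in the presence of the size bound}, cannot come from two multiples of $p^v$ (the bound forbids that) and therefore must come from the denominator clause $p^v\|\mu Y$ failing upward, i.e. from some element of $Y$ carrying a strictly higher power $p^w$. The key move is then to observe that this higher power $p^w$ is itself sylvester for $Y$, which lets me invoke Lemma \ref{sylves1} a second time rather than re-deriving the coprimality and cancellation estimate by hand.
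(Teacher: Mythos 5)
Your proposal is correct and takes essentially the same route as the paper's proof: both compare the exact power $p^w\|\mu Y$ against $v$, use the size bound $p^v>\max Y-\min Y$ to force uniqueness of the relevant multiple in $Y$ (hence a sylvester power for $Y$ when $w\ge v$, with $w=v$ excluded by hypothesis), and apply Lemma \ref{sylves1} to both $X$ and $Y$. Your handling of the case $w<v$ directly via $\delta Y\mid\mu Y$ is a slight streamlining of the paper's split into the subcases $p^w\in{\bf S}(Y)$ versus two elements of $Y$ sharing the power $p^w$, but the substance is identical.
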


\begin{proof} There exists $u\ge0$ with $p^u\|\mu Y$. If $u=v$ then the size of $p^v$ implies that $p^v \in {\mathbf S}(Y)$, contrary to hypothesis. So $u\not=v$. If $u>v$ then $p^u \in {\mathbf S}(Y)$. But 
$p^v \in {\mathbf S}(X)$ by hypothesis. So $\delta X\not=\delta Y$ by Lemma \ref{sylves1}. 

Now let $u<v$. Then $\neg(p^v\|\delta Y)$, since $p^u\|\mu Y$ implies that $p^t\|\delta Y$ only if $t\le u$. Again $\delta X \not=\delta Y$. \end{proof}

Lemmas \ref{2andmu} and \ref{sylves1} immediately establish the classic and already cited following result.

\begin{thm} {\bf (Theisinger-K\"ursch\'ak)} \label{kurschak} If $\sigma[m,n]$ is an integer then $m = n = 1$. \end{thm}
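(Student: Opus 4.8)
The plan is to read the statement as a contrapositive and reduce it, after a trivial case, to producing a single even sylvester power and invoking the two cited lemmas. First I would dispose of $m=n$: here $\sigma[m,n]=1/m$, which lies in ${\mathbb N}$ exactly when $m=1$, giving $m=n=1$ as required. So for the remainder I assume $m<n$ and aim to show $\sigma[m,n]\notin{\mathbb N}$.

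The organizing observation is that $\delta[m,n]=1$ precisely when $\sigma[m,n]\in{\mathbb N}$, since $\sigma[m,n]=\nu[m,n]/\delta[m,n]$ is already written in lowest terms. Thus it suffices to exhibit one prime power dividing $\delta[m,n]$. Because $m<n$, the interval $[m,n]$ contains two consecutive integers, hence at least one even integer, so $2\mid\mu[m,n]$ and I may fix $v\ge1$ with $2^v\|\mu[m,n]$.

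Now I would apply Lemma \ref{2andmu}, which asserts that exactly one $x\in[m,n]$ satisfies $2^v\mid x$. Combined with $2^v\|\mu[m,n]$, this is exactly the defining condition of Definition \ref{sylvesdefine} for $2^v$ to be a sylvester power for $[m,n]$; that is, $2^v\in{\bf S}[m,n]$ with $v\ge1$. Feeding this into Lemma \ref{sylves1} yields $2^v\|\delta[m,n]$, so $2\mid\delta[m,n]$ and in particular $\delta[m,n]>1$. Hence $\sigma[m,n]$ is not an integer, which closes the contrapositive.

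There is essentially no hard step: the theorem is a genuine corollary, with all the real work already front-loaded into Lemmas \ref{2andmu} and \ref{sylves1}. The only point warranting a word of care is the verification that $v\ge1$, so that $2^v$ qualifies as a sylvester power in the sense of Definition \ref{sylvesdefine} (which requires $v\in{\mathbb N}$); and this is precisely what the presence of two or more consecutive integers in $[m,n]$ guarantees.
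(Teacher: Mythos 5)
Your proof is correct and follows exactly the route the paper intends: the paper simply asserts that Lemmas \ref{2andmu} and \ref{sylves1} "immediately establish" the theorem, and your write-up supplies precisely those omitted details (the trivial $m=n$ case, the existence of an even element guaranteeing $v\ge1$, and the chain $2^v\in{\bf S}[m,n]\Rightarrow 2^v\|\delta[m,n]\Rightarrow\delta[m,n]>1$). No gaps; your care about $v\ge1$ is exactly the point that makes the "immediate" deduction rigorous.
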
  

The notion of a sylvester power suggests a way of strengthening the Erd\"{o}s-Niven theorem. The number of quadruples $m<n<m'<n'$ for which ${\mathbf S}[m,n]={\mathbf S}[m',n']$ seems to be finite. 
The only such quadruples of which we are aware are the two giving us ${\mathbf S}[4,7]=\{2^2,3,5,7\}={\mathbf S}[20,21]$ and ${\mathbf S}[5,7]=\{2,3,5,7\}={\mathbf S}[14,15]$. A consequence would be that 
almost always $\delta[m,n]\not=\delta[m',n']$ when $1<m<n<m'<n'$. 

We hope that a modification of a proof of Sylvester's Theorem could establish our\vspace{.5em} 

\noindent{\bf Conjecture.} If $1<m<n<m'<n'$ and if $n-m\le n'-m'$ then ${\mathbf S}[m,n]\not={\mathbf S}[m',n']$.\vspace{.5em}   

For each divergent subseries $1/{\mathbf x}:=\sum_{i=1}^\infty 1/x_i$ of the harmonic series, if ${\cal I}({\mathbf x})$ is the family of finite segments of ${\mathbf x}:=\langle x_i\rangle_{i=1}^\infty$, then 
$\{\sigma X: X \in {\cal I}({\mathbf x})\}$ is dense in ${\mathbb R}^+$. For which such ${\mathbf x}$ is $\sigma\restrict{\cal I}({\mathbf x})$ injective? 

The prime reciprocals series  $1/{\mathbf p}:=1/2+1/3+1/5+1/7+\cdots$ diverges. Let ${\mathbb P}:=\{p_1<p_2<p_3<\cdots\}$ be the set of all primes.  Are $\sigma\restrict{\cal E}({\mathbb P})$ and 
$\delta\restrict{\cal E}({\mathbb P})$ injective? In general, if $1/{\mathbf d} := \sum_{i=1}^\infty 1/d_i$ is a divergent subseries of the harmonic series, with $D:=\{d_i:i \in {\mathbb N}\}$ pairwise coprime, 
then must $\delta\restrict{\cal E}(D)$ and $\sigma\restrict{\cal E}(D)$ be injective? Our Theorem \ref{Main2} answers such questions affirmatively. So we now prove Theorem \ref{Main2}. 

\begin{proof} Let $A$ and $B$ be distinct nonempty finite subsets of the pairwise coprime set $X\subseteq{\mathbb N}$. Then without loss of generality there exist $a \in A\setminus B$ and a prime $p$ which 
divides $a$ but which is coprime to every $y \in (A\cup B)\setminus\{a\}$. Then $p|\delta A$ but $\neg(p|\delta B)$. Therefore $\delta A \not=\delta B$, and so $\sigma A\not= \sigma B$. As for the theorem's 
final claim, if $C$ is a finite nonempty subset of $X$ then $\sigma C\in {\mathbb N} \Leftrightarrow \delta C = 1 \Leftrightarrow C=\{1\} \Leftrightarrow \sigma C=1$.   \end{proof}

The function $\nu\restrict{\cal E}(X)$ is not injective: $\nu\{n\}=1, \nu\{3,13\}=16=\nu\{5,11\},\nu\{5,13\}=18=\nu\{7,11\},\ldots$

It is reasonable to ask what we can say about $\nu[{\cal E}(X)]$ in terms of $X$.  For example, for what nonsingleton sets $C\not=D$ in ${\cal E}(X)$, does $\nu C=\nu D$ hold? \vspace{.5em}

Like the functions $\star^k$, at which we shall glance in the next section, the function $\nu$ is potentially useful as a hunter of prime integers. For, if $q_1,\ldots,q_k$ are any $k$ distinct primes, and if 
$e_1,\ldots,e_k$ is any $k$-length sequence of positive integers then $\nu \{q_1^{e_1},q_2^{e_2},\ldots,q_k^{e_k}\}$ is coprime to each of these $q_i$.

\section{Stars}

Let ${\star}^\bullet b$ denote the sequence $\langle \star^j(b)\rangle_{j=1}^\infty$ of iterations of the function $\star:x\mapsto x(x+1)$ applied to a starting element $b\in{\mathbb N}$, and let ${\mathbb P}_b$ be the set of all primes which divide some term in the sequence $\star^\bullet b$. Since $\star^{j-1}(b)|\star^j(b)$ for all $j\in{\mathbb N}$, and since $x$ is coprime to $(\star(x))/x>1$ whenever $x\ge2$, we see that for each $j\ge1$ the integer $\star^j(b)$ is divisible by some prime that is coprime to $\star^i(b)$ for every $i\in[0,j-1]$. So the set ${\mathbb P}_b$ is infinite. It is easy also to see that, if $p^v\|\star^tb$ for some $t\ge 0$, then $p^{v+1}$ divides no term of $\star^\bullet(b)$.

In contrast to the limitations on $\star^{\bullet} b$ in the subsequent discussion, it is clear that for every finite set $S :=\{ p_1^{e(1)}, p_2^{e(2)},\ldots, p_n^{e(n)} \}$  of powers of distinct primes, there is a star sequence  $\star^\bullet b$  such that  $p_i^{e(i)} \| \star^j(b)$  for every  $i \in [1,n]$  and for every term  $\star^j(b)$  of the sequence  $\star^\bullet b$. Simply take $b$ to be the product of those prime powers.

\begin{thm}\label{star} Let $p \in {\mathbb P}_b$, let $i := i(b,p) \ge 0$ be the least integer with $p|\star^ib$, and hence with $p^n\|\star^ib$ for some $n := n(b,p) \ge 1$.  
Then $p^n\|\star^jb$ for every $j > i$.  
\end{thm}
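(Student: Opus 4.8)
The plan is to reduce the entire statement to a single-step claim about the operator $\star$ and then iterate it by induction on $j$. The single-step claim is: if $p^n\|c$ with $n\ge1$, then $p^n\|\star c$. This is where all the content sits, and it is elementary. Since $p^n\|c$ with $n\ge1$ forces $p\mid c$, the consecutive integer $c+1$ satisfies $c+1\equiv1\pmod p$, so $p\nmid(c+1)$. Because $\star c=c(c+1)$ and the exact power of a prime in a product is the sum of the exact powers in its factors, the exact power of $p$ in $\star c$ is $n+0=n$; that is, $p^n\|\star c$.

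With the claim in hand, I would argue by induction on $j\ge i$ that $p^n\|\star^jb$. The base case $j=i$ is precisely the hypothesis defining $n=n(b,p)$, namely $p^n\|\star^ib$. For the inductive step, supposing $p^n\|\star^jb$ for some $j\ge i$, I apply the single-step claim with $c:=\star^jb$ — noting that $n\ge1$ guarantees $p\mid c$, so the claim applies — to conclude $p^n\|\star(\star^jb)=\star^{j+1}b$. This yields $p^n\|\star^jb$ for every $j\ge i$, and in particular for every $j>i$, which is the assertion of the theorem.

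The potential obstacle is essentially nonexistent: the crux is merely that consecutive integers are coprime, so appending the factor $c+1$ to $c$ when $p\mid c$ can neither raise nor lower the power of $p$. The only points deserving a word of care are that $n\ge1$ persists throughout — so that $p$ genuinely divides the argument at each stage, which the induction supplies since the exact power stays fixed at $p^n$ — and the role of the minimality of $i$. In fact minimality of $i$ is not used by the forward iteration at all; it serves only to pin down $n$ as the exact power first attained, giving the statement its sharpest form. Thus the whole proof amounts to a one-line valuation computation promoted to an induction on $j$.
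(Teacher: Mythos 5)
Your proposal is correct and is essentially the paper's own argument: the paper likewise iterates the observation that $\star^{j+1}b = \star^jb(\star^jb+1)$ with $\star^jb$ coprime to $\star^jb+1$, so the exact power $p^n$ propagates unchanged from $\star^jb$ to $\star^{j+1}b$. Your version merely makes explicit the induction on $j$ and the role of $n\ge1$ (guaranteeing $p\mid\star^jb$, hence $p\nmid\star^jb+1$), which the paper leaves implicit.
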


\begin{proof} Let  $p^n\|\star^jb$. Since $\star^{j+1}b := \star^jb(\star^jb+1)$ and $\star^jb$ is coprime to $\star^jb+1$, we see that $p^n\|\star^{j+1}b$.  
\end{proof}

It is natural to ask: For which $b\in{\mathbb N}$, if any, does it happen that ${\mathbb P}_b = {\mathbb P}$, where ${\mathbb P}$ is the set of all primes?  

Let $p\in{\mathbb P}$. A term $\star^j(b)$ of sequence $\star^\bullet b$ is a multiple of $p$ if and only if $\star^j(b) \equiv_p0$; i.e., iff \ $\star^j(b)\equiv0({\rm mod}\,p)$. So we will work in the field ${\mathbb Z}_p$, and let 
$b\in{\mathbb Z}_p$. In this context, $p\in{\mathbb P}_b$ if and only if the sequence $\star^\bullet b$ in ${\mathbb Z}_p$ contains a term equal to $0$.

Examples: \  In ${\mathbb Z}_3$ we have that $\star(0) = 0,\,\star(1) = 2,\, \star(2) = 0$, and so $\star^2(b) = 0$ for every $b$. Thus $2\in{\mathbb P}_b$ for all $b$. On the other hand, in ${\mathbb Z}_5$, we have that $\star(0)=0,\,\star(1)=2,\,\star(2)=1,\star(3)=2,\,\star(4)=0$. So $\star^j\{1,2,3\}\subseteq\{1,2,3\}$ for every $j\ge0$. Hence $5\in{\mathbb P}_b$ if and only if either $b\equiv_50$ or $b\equiv_54$. 

In fact, infinitely many primes $p$ act like $5$ in the second example above, in that $p\in{\mathbb P}_b$ if and only if either $b\equiv_p0$ or $b\equiv_p -1$ for $p$. Working in ${\mathbb Z}_p$, $p >2$, we have that $\star x := x(x+1)=0$ if and only if either $x=0$ or $x=-1$. But $\star(x)=-1$ can happen only for $x\in{\mathbb Z}_p\setminus\{0,-1\}$. That $\star(x)=-1$ for some such $x$ is equivalent to the existence of a solution $x\in{\mathbb Z}_p\setminus\{0,-1\}$ of the quadratic equation $x^2+x+1=0$; that is, $\{(-1-\sqrt{-3})/2,(-1+\sqrt{-3})/2\}\cap({\mathbb Z}_p\setminus\{0,-1\})\not=\emptyset$. Thus there exists a solution $x$ if and only if $-3$ is a quadratic residue modulo $p$. 

Applying the Law of Quadratic Reciprocity, we see that 
\[
\bigg(\frac{-3}{p}\bigg)  = \bigg(\frac{-1}{p}\bigg)\bigg(\frac{3}{p}\bigg) = \bigg(\frac{-1}{p}\bigg)(-1)^{\frac{p-1}{2}}\bigg(\frac{p}{3}\bigg) = \bigg(\frac{p}{3}\bigg). 
\]
So we can infer for $p\not=3$ the following: $-3$ is a quadratic residue modulo $p$ \ $\Leftrightarrow$ \ $p$ is a quadratic residue modulo $3$ \ $\Leftrightarrow$ \ $p\equiv_3 1$.  So $\{b: p\in{\mathbb P}_b\} = \{b: b\equiv_p 0\vee b\equiv_p -1\}$ \ $\Leftrightarrow$ \ $p\equiv_3 2$, for $p\ge5$. These ideas lead to 

\begin{thm}\label{Can not get all primes} There is no $b\in{\mathbb N}$ for which ${\mathbb P}_b = {\mathbb P}$.   \end{thm} 

\begin{proof} Let $b$ be given. By Dirichlet's Theorem, there are infinitely many primes congruent to $2$ modulo $3$. Let $p>b+1$ be such a prime. Then $\neg(b\equiv_p 0)$ and $\neg(b\equiv_p -1)$. So, $p\not\in {\mathbb P}_b$ by the discussion above.  \end{proof}\

\section{Is $\sigma:{\cal E}(X)\rightarrow{\mathbb Q}^+$ bijective for some $X$?}

By Theorem \ref{Main1} we have that, for every $r \in {\mathbb Q}^+$, there are infinitely many $S \in {\cal E}$ such that $\sigma S=r$.  We call this phenomenon {\em hypersurjectivity}.

\begin{defn}
The function $f \colon A \rightarrow B$ is {\em hypersurjective} iff for all $b \in B$ the set of preimages of $b$ is infinite.
\end{defn}

Using this definition, we have that $\sigma:{\cal E}\rightarrow{\mathbb Q}^+$ is {\em hypersurjective}. This fact leads to the titular question of the present Section. The strongly negative answer  
below, provided by P\'{e}ter P. P\'{a}lfy, expanded our inquiry. \vspace{.3em}

Henceforth ${\cal F}$ denotes the family of all finite nonempty subsets of ${\mathbb Q}^+$, and ${\cal F}(X) := \{S:S \in {\cal F}\wedge S\subseteq X\}$ when $X\subseteq{\mathbb Q}^+$.  Notice 
that $\{\{1/j:j \in S\}$ for $S \in {\cal E}\}\subset{\cal F}$. We define the function $\Sigma:{\cal F}\rightarrow{\mathbb Q}^+$ by \[\Sigma: S\mapsto \Sigma S := \sum_{x \in S} x.\]

\begin{thm}\label{Palfy}{\bf (P\'{a}lfy)} \  There exists no $X\subseteq{\mathbb Q}^+$ for which $\Sigma:{\cal F}(X)\rightarrow{\mathbb Q}^+$ is bijective onto ${\mathbb Q}^+$.  \end{thm}

\begin{proof} Arguing by contradiction, we assume $\Sigma:{\cal F}(X)\rightarrow{\mathbb Q}^+$ is a bijection for a particular $X\subseteq{\mathbb Q}^+$. 

\begin{Lem} If $\{a,b\}\subseteq X$ with $a<b$, then $2a\le b$. \end{Lem}

\begin{proof} Assume that $a<b<2a$. Then $0<b-a<a$. So, since $\Sigma\restrict{\cal F}(X)$ is surjective onto ${\mathbb Q}^+$, there exists $Y \in {\cal F}(X)$ such that $\Sigma Y = b-a$. Note  
that $a\not\in Y$ since $b-a<a$. So $Y_1 := Y\cup\{a\} \in {\cal F}(X)$, and $\Sigma Y_1 = (b-a)+a = b = \Sigma\{b\}$. But $Y_1 \not= \{b\} \in {\cal F}(X)$, violating the assumption that 
$\Sigma\restrict{\cal F}(X)$ is injective. \end{proof}

To continue our proof of Theorem \ref{Palfy}, we now fix any $a \in X$. The lemma implies that we can list all of the elements in $X$ in an order-preserving way 
\[\cdots < x_{-3} < x_{-2} < x_{-1} < x_0 := a < x_1 < x_2 < x_3 < \cdots\]
and observe that $x_{-n} \le a/2^n$ and $x_n \ge 2^na$ for every $n \in {\mathbb N}$. Thus we have that the possibly infinite sum \[\sum_{n=1}^\infty x_{-n}\quad\mbox{satisfies the inequality}
\quad \sum_{n=1}^\infty x_{-n} \le \sum_{n=1}^\infty\frac{a}{2^n} = a.\]  If the strict inequality $x_{-n} < a/2^n$ holds for some $n \in {\mathbb N}$ then 
$a+\sum_{j=1}^\infty x_{-j} < 2a \le x_1.$ That is, then there can be no set $V \in {\cal F}(X)$ for which $\Sigma V$ is an element in the nonempty open interval 
$(a+\sum_{j=1}^\infty x_{-j},2a).$ Hence, by the surjectivity of $\Sigma\restrict{\cal E}(X)$, we see that $x_{-n} = a/2^n$ for every $n\in {\mathbb N}$. But $a \in X$ was chosen arbitrarily. 
Thus we have also that  $x_n=2^na$ for all $n \in {\mathbb N}$. That is to say, $X\subseteq \{2^ia:i \in {\mathbb Z}\}$. Of course this entails that there is no $q \in {\mathbb Q}^+$, the odd portion of 
whose denominator is coprime to the denominator of $a$, but such that $q$ is an element in the range of $\Sigma\restrict{\cal F}(X)$.  \end{proof}

P\'{a}lfy's theorem motivates a more general question: 

\begin{question}
If a set $X$ of positive rationals is such that $\Sigma\restrict{\cal F}(X)$ is a surjection onto ${\mathbb Q}^+$, then must $\Sigma\restrict{\cal F}(X)$ be hypersurjective? 
\end{question}

Attempts to argue by contradiction in order to provide an affirmative answer to this question resulted in several propositions. The  hypothesis for all of these propositions is that $\Sigma\restrict{\cal F}(X)$ is a surjective mapping onto ${\mathbb Q}^+$.\vspace{.5em}

\begin{defn} We call $g \in X\subseteq{\mathbb Q}^+$ {\em primitive for} $X$ iff $\{g\}$ is the only finite $S\subseteq X$ with $\Sigma S = g$.  \end{defn}

Slightly abusing language, we call the set $X\subseteq{\mathbb Q}^+$ itself surjective when the function $\Sigma:{\cal F}(X)\rightarrow{\mathbb Q}^+$ is a surjection, and we likewise call $X$ 
hypersurjective when the function $\Sigma\restrict{\cal F}(X)$ is hypersurjective onto ${\mathbb Q}^+$.\vspace{.5em} 

\begin{prop}\label{HasPrim}  A surjective $X\subseteq{\mathbb Q}^+$ has a primitive element if and only if $X$ is not hypersurjective.   \end{prop} 

\begin{proof} Suppose $X$ is surjective but not hypersurjective. For some $q \in {\mathbb Q}^+$, the family $\Sigma^-q := \{A: A \in {\cal F}(X) \wedge \Sigma A = q\}\not=\emptyset$ is finite. The union,  
$\bigcup\Sigma^-q\subset{\mathbb Q}^+$ of the pre-images of $q$ under $\Sigma$, is finite as well; hence $\bigcup\Sigma^-q$ contains a minimum $\overline{x}$. Choose $S \in \Sigma^-q$ with 
$\overline{x} \in S$. 

Suppose that $Y \in \Sigma^-\overline{x}$ is not $\{\overline{x}\}$. Now, $\overline{x}\not\in Y$ and $Y\cap S = \emptyset$, since every element in $S\setminus\{\overline{x}\}$ is larger than $\overline{x}>\max Y$. 
Let $Z := Y\cup(S\setminus\{\overline{x}\})$. Then $\Sigma Z=q$. This implies the absurdity, $Z \in \Sigma^-q$. But $\min Z < \overline{x} = \min\bigcup\Sigma^-q$. So there is no such $Y$, and thus 
$\Sigma^-\overline{x} = \{\{\overline{x}\}\}$. Hence  $\overline{x}$ is primitive for $X$. The converse is immediate.\end{proof}

\begin{prop}\label{Less} Let $z_1<z_2<\cdots < z_n < \overline{x}$ where the $z_i$ are elements in a surjective $X\subseteq{\mathbb Q}^+$, and where $\overline{x}$ is primitive for $X$. Then  
$\sum_{i=1}^n z_i < \overline{x}$.   \end{prop}

\begin{proof} Since $\overline{x}$ is primitive for $X$, we have  $\sum_{i=1}^n z_i \not=\overline{x}$. So pretend that $\sum_{i=1}^n z_i > \overline{x}$.  Then, for some $m$,   
\[ \sum_{i=m}^n z_i < \overline{x} < \sum_{i=m-1}^nz_i\quad\mbox{whence}\quad 0 < y := \overline{x}-\sum_{i=m}^n z_i < z_{m-1} < z_m < \cdots < z_n.\]  Since $X$ is surjective, we can 
choose $S \in {\cal F}(X)$ for which $\Sigma S = y$. If $s \in S$ then $s\le y$, and hence $s<z_i$ for every $i \in\{m,\ldots,n\}$. Therefore $S\cap\{z_m,\ldots, z_n\} = \emptyset$.  It follows that 
$\Sigma(S\cup\{z_m,\ldots, z_n\}) = \Sigma S + \sum_{i=m}^n z_i = y +\sum_{i=m}^n z_i = \overline{x}$ contrary to the hypothesis that $\overline{x}$ is primitive for $X$.     \end{proof}

\noindent{\bf Terminology.} When $\overline{x}$ is primitive for a surjective $X$, we write $\overline{x}^<$ to designate the set $X\cap(0,\overline{x})$.\vspace{.5em} 

The following assertion treats the sum of the infinite subset $\overline{x}^<\subseteq{\mathbb Q}^+$.

\begin{prop}\label{InfiniteSum} Let $\overline{x}$ be primitive for the subjective set $X\subseteq{\mathbb Q}^+$. Then $\displaystyle{\sum_{x \in \overline{x}^<} x = \overline{x}}$.     \end{prop} 

\begin{proof} Since  $\overline{x}^< := \{x_1,x_2,\ldots\}$ is denumerable, $\displaystyle{\sum_{x \in \overline{x}^<} x = \sum_{i=1}^\infty x_i}$. If either $\displaystyle{\sum_{i=1}^\infty x_i > \overline{x}}$ 
or $\displaystyle{\sum_{i=1}^\infty x_i = \infty,}$ then $\displaystyle{\sum_{i=1}^n x_i > \overline{x}}$ for some partial sum -- violating Lemma \ref{Less}. Hence, $\displaystyle{\sum_{i=1}^\infty x_i = y}$ for 
some $y\le\overline{x}$. But, if $y<\overline{x}$ then the interval $(y,\overline{x})$ contains elements $t$ with $\Sigma^-t = \emptyset$, contrary to the hypothesis that $X$ is surjective. \end{proof}

Observe that if $X$ is surjective then $0$ is an accumulation point of $X$. Furthermore, if $\overline{X}$ denotes the set of all primitive elements in $X$, then $0$ is the only accumulation point of $\overline{X}$ only if $\overline{X}$ contains no 
minimum element, and otherwise $\overline{X}$ has no accumulation points.\vspace{.5em}

Our efforts to prove that every surjective  $X$  is hypersurjective were stymied by an impediment appearing in sundry guises.  One of those guises is this: \  Given $q \in {\mathbb Q}^+$ and 
$\{A,B\}\subseteq{\cal F}(X)$  with $\Sigma A = \Sigma B = q $, is there a relevant procedure for replacing those two sets with a pairwise disjoint family ${\cal D}(A,B)\subseteq{\cal F}(X)$ for which 
$\Sigma C = \Sigma A$  for every  $C \in {\cal D}(A,B)$?  The answer is provided in \S6.
\vspace{1em}

The function $\Sigma$ induces two functions, $\nu^*:{\cal F}\rightarrow{\mathbb N}$ and $\delta^*:{\cal F}\rightarrow{\mathbb N}$, which are defined by 
\[\Sigma S = \frac{\nu^* S}{\delta^* S}\quad\mbox{where the integers}\quad \nu^* S\quad\mbox{and}\quad \delta^* S\quad\mbox{are coprime, for each}\quad S \in {\cal F}.\]\vspace{.5em}  

\begin{question}
What can be said about subfamilies ${\cal A\subseteq F}$ which are not of the form ${\cal F}(X)$ for some $X\subseteq{\mathbb Q}^+$? 
\end{question}

\begin{Exmp}  Let ${\cal X}$ be the family of all nonempty finite  sets $S$ of reduced fractions of the form $n/p^i$, where $p^i$ is the power of a prime $p$ for some integer $i\ge0$, 
where $n\ge1$, and where the set of denominators of the elements in $S$ is pairwise coprime. Of course ${\cal X} \subset {\cal F}$.  For every $n \in {\mathbb N}$, clearly $\delta^*(T) = n$ for an 
infinite family of $T \in {\cal X}$. However, ${\sf Range}(\Sigma\restrict{\cal X})\not={\mathbb Q}^+$; indeed,  $1/6 \not\in {\sf Range}(\Sigma\restrict{\cal X})$. Although the functions 
$\Sigma\restrict{\cal X}$ and $\nu^*\restrict{\cal X}$ are not injective, for all $q \in {\mathbb Q}^+$ we do have $|(\Sigma\restrict{\cal X})^-q|<\infty$ and $|\{S:S \in {\cal X}\wedge\nu^*S = q\}|<\infty$. 
\end{Exmp}

\section{Must every surjective set $X$ be hypersurjective?}

We will build a surjective set $X\subseteq{\mathbb Q}^+$ for which the only finite $S\subseteq X$ with $\Sigma S = 1$ is $S = \{1\}$, making $1$ primitive. This process begins with our 
revisiting the family ${\cal E}$ of all nonempty finite sets of positive integers. 

\begin{defn} We say that a set $K \in {\cal E}$ is a {\em binary basis} iff, for every positive integer $j\le\Sigma K$, there is some $A_j\subseteq K$ with $j=\Sigma A_j$. 
If $K$ is a binary basis then the integer $\Sigma K$ is said to be the {\em maximal sum} for $K$. More generally, an integer $k$ gets called a maximal sum iff $k$ is the 
maximal sum of some element in ${\cal E}$.  \end{defn}

The following provides insight into our reason for the expression ``binary basis'' in the definition above. We omit the easy elementary proof of 

\begin{prop}\label{Binary} The set of terms of each finite prefix of the sequence $\langle2^j\rangle_{j=0}^\infty$ is a binary basis. For each integer $n\ge0$, the maximal 
sum of the binary basis  $B_n := \{2^j: 0\le j \le n\}$ is $2^{n+1}-1$.     \end{prop}

We omit also a proof of 

\begin{lem}\label{Prefixes} If $\{a_1<a_2<\cdots<a_k\}$ is a binary basis, then  $\{a_1<\cdots<a_j\}$ is a binary basis when $j<k$. \end{lem}

\begin{lem}\label{MaxSums} There are arbitrarily long sequences of consecutive maximal sums of binary bases.  \end{lem}

\begin{proof} Pick $n \in {\mathbb N}$, and let $x$ be any integer with $2^n+1\le x\le 2^{n+1}$. We show that $K_x := \{1,2,\ldots, 2^n,x\}$ is a binary basis. Choose a  
positive integer $m\le\Sigma K_x$. 

Proposition \ref{Binary} notes that $B_n:=\{1,2,\ldots,2^n\}$ is a binary basis whose maximal sum is $2^{n+1}-1$. So if $m\le 2^{n+1}-1$ then $m = \Sigma S$ for some 
$S\subseteq B_n\subseteq K_x$. Thus we can take it that $2^{n+1}\le m \le \Sigma K_x$. So $m-x \le \Sigma K_x -x = \Sigma B_n$. Consequently there is a subset $S\subseteq B_n$ 
for which $\Sigma S = m-x$.  But then  $m =  (\Sigma S) +x = \Sigma (S\cup\{x\})$ while $S\cup\{x\}\subseteq K_x$.  

We have shown that $K_y$ is a binary basis for every integer $y$ with $2^n+1\le y \le 2^{n+1} = 2^n+2^n$. So $\langle 2^{n+1}+y\rangle_{y=0}^{2^n-1}$ is a sequence of $2^n$ 
consecutive integers each of which is a maximal sum of a binary basis.\end{proof}

 Lemma \ref{MaxSums} enables us to take leave of ${\cal E}$ and to return to our main present interest: The family ${\cal F}$ of nonempty finite sets of positive rational numbers. 
We use Lemma \ref{MaxSums} to construct recursively an infinite sequence $Y_2\subseteq Y_3\subseteq Y_4\subseteq\cdots$ of sets $Y_t$ of positive rationals and of  corresponding 
integers $m_t$ that will enable us to build the promised surjective but nonhypersurjective set $X\subseteq{\mathbb Q}^+$. This sequence $\langle\langle Y_t,m_t\rangle\rangle_{t=2}^\infty$ 
of pairs shall be designed to satisfy the following four criteria. For every integer $n\ge2$ we will contrive that: \vspace{.5em}
 
(1). \ $n|m_n$\vspace{.3em}

(2). \ $\displaystyle{ \min Y_n = \frac{1}{m_n}}$
\vspace{.3em}

(3). \ $\displaystyle{\Sigma Y_n = 1-\frac{1}{m_n}}$\vspace{.3em}

(4). \ $\displaystyle{ \left\{\frac{1}{m_n},\frac{2}{m_n},\ldots,\frac{m_n-1}{m_n}\right\}\subseteq \Sigma{\cal F}(Y_n)}\,\,$\footnote{The fact that we actually get set equality here is 
irrelevant to our argument ahead.}\vspace{.8em}

\noindent We induce on $n\ge2$. Define $Y_2 := \{1/2\}$ and $m_2 := 2$. Clearly the four criteria (1)-(4) hold for $\langle Y_2,m_2\rangle$. 

Suppose that $Y_2\subseteq Y_3\subseteq\cdots\subseteq Y_n$ and $m_2,m_3,\ldots,m_n$ have been specified, and that the four criteria hold for $\langle Y_t,m_t\rangle$ for every 
$t \in \{2,3,\ldots,n\}$. If $(n+1)|m_n$ then let $Y_{n+1} := Y_n$,  let $m_{n+1} := m_n$ and observe that the four criteria (1)-(4) hold for $\langle Y_{n+1},m_{n+1}\rangle$. So we will focus 
upon the situation where $m_n$ is not a multiple of $n+1$. It will become obvious that the situation $\neg\,( i+1|m_i)$ must occur for infinitely many $i$.\vspace{.5em} 

Note\footnote{Recall that when $I$ is a finite set of nonzero integers then $\mu I$ denotes the least common multiple of the elements in $I$.} that successive terms in the integer sequence 
$\displaystyle{\left\langle \frac{j\mu\{n+1,m_n\}}{m_n}-1\right\rangle_{j=1}^\infty }$ differ by the positive integer ${\displaystyle\frac{\mu\{n+1,m_n\}}{m_n}}$. Hence, by Lemma \ref{MaxSums}, 
there is a least positive integer $k$ for which $\displaystyle{\frac{k\mu\{n+1,m_n\}}{m_n}-1}$ is a maximal sum.  Let $K$ be a binary basis for which $\displaystyle{\Sigma K =  
\frac{k\mu\{n+1,m_n\}}{m_n} -1, }$ and define \[ m_{n+1} := k\mu\{n+1,m_n\}\qquad\mbox{and}\qquad S := \left\{\frac{x}{m_{n+1}}: x \in K\right\}\qquad\mbox{and then}\qquad Y_{n+1} := 
Y_n\cup S. \]

\noindent Clearly $Y_n\subseteq Y_{n+1}$. We must confirm that the pair $\langle Y_{m+1},m_{n+1}\rangle$ satisfies the criteria (1)-(4).\vspace{1em}

\textbf{(1) holds for $\langle Y_{n+1},m_{n+1}\rangle$:}
\  Since $n+1$ divides $\mu\{n+1,m_n\}$, it divides $k\mu\{n+1,m_n\} =: m_{n+1}$.\vspace{1em}

\textbf{(2) holds for $\langle Y_{n+1},m_{n+1}\rangle$:}
\  Of course $1\in K$. So $\displaystyle{\frac{1}{m_{n+1}} \in S\subseteq Y_{n+1}.}$ Plainly $\displaystyle{\frac{1}{m_{n+1}} = \min S}.$ By the inductive hypothesis, 
$\displaystyle{\min Y_n = \frac{1}{m_n} > \frac{1}{m_{n+1}}.}$ Thus $\displaystyle{\frac{1}{m_{n+1}} = \min Y_{n+1} }$. 
\vspace{1em}

\textbf{(3) holds for $\langle Y_{n+1},m_{n+1}\rangle$:}
\ By the inductive hypothesis, $\Sigma Y_n = 1-1/m_n$. 
We have also that 
\[ \Sigma S = \frac{\Sigma K}{m_{n+1}} =  \frac{\frac{k\mu\{n+1,m_n\}}{m_n}-1}{m_{n+1}} = \frac{\frac{m_{n+1}}{m_n}-1}{m_{n+1}} = \frac{1}{m_n} - \frac{1}{m_{n+1}} \]. 
Thus since $S = Y_{n+1}-Y_n$, we get by (2) that 
\[\Sigma(Y_{n+1}) = \Sigma(S)+\Sigma Y_n = \frac{1}{m_n}-\frac{1}{m_{n+1}}+1-\frac{1}{m_n} = 1-\frac{1}{m_{n+1}}.\]

\vspace{1em}

\textbf{(4) holds for $\langle Y_{n+1},m_{n+1}\rangle$:}
\ We know that $m_n|m_{n+1}$ since $m_{n+1} := k\mu\{n+1,m_n\}$. So $m_{n+1} = k'm_n$ for a positive integer $k'$. Now consider the 
positve rational number $d/m_{n+1}$, where $d < m_{n+1}$ is an integer. There are nonnegative integers $r<k'$ and $q$ for which $d = qk'+r$. Then 
\[ \frac{d}{m_{n+1}} = \frac{qk'+r}{m_{n+1}} = \frac{qk'}{m_{n+1}}+\frac{r}{m_{n+1}} = \frac{q}{m_n}+\frac{r}{m_{n+1}}. \] Now $0\le q < m_n$ since $0 < d/m_{n+1} <1$. By our 
definition of $S$, there is a subset $S_d \subseteq S$ for which \[ \Sigma S_d = \frac{r}{m_{n+1}}. \] If $q > 0$ then, by the inductive hypothesis, there is a subset $T_d\subseteq Y_n$ for 
which  \[ \Sigma T_d = \frac{q}{m_n}.\]  But then, since $S\cap Y_n = \emptyset$, we infer that 
\[ \Sigma(S_d\cup T_d) = \frac{q}{m_n}+\frac{r}{m_{n+1}} = \frac{qk'+r}{m_{n+1}} = \frac{d}{m_{n+1}}. \] This proves that 
\[\left\{\frac{1}{m_{n+1}}, \frac{2}{m_{n+1}},\ldots, \frac{m_{n+1}-1}{m_{n+1}} \right\} \subseteq \Sigma {\cal F}(Y_{n+1}).\] 

We have shown that each of the four criteria,  (1)-(4), is satisfied by the pair $\langle Y_{n+1},m_{n+1}\rangle$. \vspace{1em}

The induction just concluded establishes that there is an upward-nesting sequence $Y_2 \subseteq Y_3 \subseteq Y_4 \cdots $ of finite sets of positive rationals and an accompanying sequence 
$2 = m_2 \le m_3 \le m_4 \le \cdots$ of integers such that, for each integer $n\ge 2$, the pair $\langle Y_n,m_n\rangle$ satisfies the four criteria (1)-(4).\vspace{.5em} 

Our efforts culminate in our definition of this crucial set: \ \  $X := \bigcup_{n=2}^\infty Y_n\cup\{1,2,4,8,\ldots\} \subseteq{\mathbb Q}^+$.  

\begin{thm}\label{NotHyper}  The function $\Sigma:{\cal F}(X)\rightarrow{\mathbb Q}^+$ is a surjection which is not a hypersurjection. \end{thm}

\begin{proof} Note that $Y := \bigcup_{i=2}^\infty Y_i$ is disjoint from $\{2^j: j=0,1,2,\ldots\}$ since every element in $Y$ is less than $1$. 

We will show: \  [1] \ that ${\mathbb Q}^+ = \Sigma{\cal F}(X)$, and \  [2] \ that $S := \{1\}$ is the only $S \in {\cal F}(X)$ for which $\Sigma S = 1$.\vspace{.5em} 

[1]: \ Let $p \in {\mathbb Q}^+$, and write $p = m+r$ for $m\ge0$ an integer and $0\le r <1$.  Let $r>0$, and write $r = a/b$ where $\{a,b\}\subseteq{\mathbb N}$. Since the pair 
$\langle Y_b,m_b\rangle$ satisfies the criterion (1) above, we have that $b|m_b$, whence $m_b = cb$ for some $c \in {\mathbb N}$. It follows that $r = a/b = ac/m_b$. Note that $ac < m_b$ since 
$a < b$. Therefore, since $\langle Y_b,m_b\rangle$ satisfies the criterion (4) above, we have that there is a subset $V\subseteq Y_b\subseteq Y$ for which $r = ac/m_b = \Sigma V$.  

If $p = 0+r$ then $\Sigma V = p$,  But if $m>0$, then $m \in {\mathbb N} = \Sigma{\cal F}(\{1,2,4,8,\ldots\}) \subseteq\Sigma{\cal F}(X)$. So $m = \Sigma U$ for some finite $U\subseteq\{1,2,4,\ldots\}$. 
Moreover $V\cap U = \emptyset$ since $V$ contains no integers. So $V\cup U$ is a finite subset of $X$, and $\Sigma(V\cup U) = p$. We have established the surjectivity of 
$\Sigma\restrict{\cal F}(X)$.\vspace{1em} 

[2]:  \  Obviously $Y\subseteq(0,1)$. Pick an arbitrary finite subset $W\subseteq Y$. Then, since the set sequence $\langle Y_n\rangle_{n=1}^\infty$ nests upward, we have that $W \subseteq Y_\ell$ for some 
positive integer $\ell$.  By the criterion (3) of the four above, we have that $\Sigma W \le \Sigma V_\ell = 1-1/m_\ell < 1$.  So $\{1\}$ is the only finite subset $S\subseteq X$ for which $\Sigma S = 1$.       \end{proof}

\noindent{\large\bf Acknowledgments} \ Our efforts over a decade's span were aided by the counsel and support of friends and family. In particular, Arthur Tuminaro goaded one of us into the research from which our paper has grown. The paper's development benefited from conversations with Bob Cotton, Jacqueline Grace, and Rob Sulman. James Ruffo caught one of our early errors, allowing for its correction.

\vspace{1em} 

\noindent{\sf 2010 Mathematics Subject Classification}: \ 11A25, 
11N13

\end{document}